\colorlet{LightRubineRed}{RubineRed!70!}
\colorlet{Mycolor1}{green!10!orange!90!}
\definecolor{DarkRed}{HTML}{cc0000}
\definecolor{ChapterHeadColor}{HTML}{cc0000}
\definecolor{PartHeadColor}{HTML}{cc0000}
\definecolor{DarkBlue}{HTML}{0000cc}
\definecolor{QuoteColor}{HTML}{665665}
\newcommand{\B}{{\mathcal B}}
\renewcommand{\S}{{\mathcal S}}
\newcommand{\Zz}{{\mathcal Z}}
\newcommand{\E}{{\mathcal E}}
\newcommand{\X}{{\mathfrak X}}
\newcommand{\Y}{{\mathfrak Y}}
\newcommand{\BB}{\boldsymbol{\mathscr{S}}}
\newcommand{\Z}{{\mathbb Z}}
\newcommand{\Q}{{\mathbb Q}}
\newcommand{\1}{\{1\}}
\newcommand{\timesd}{\!\times\!}
\newcommand{\gic}{\bigskip {\color{green}\hrule} \bigskip}
\DeclareSymbolFont{cmsymbols}{OMS}{cmsy}{m}{n}
\DeclareSymbolFontAlphabet{\mathcal}{cmsymbols}
\theoremstyle{plain}
\newtheorem{Theorem}{Theorem}[section]
\newtheorem{Lemma}[Theorem]{Lemma}
\newtheorem{Corollary}[Theorem]{Corollary}  
\theoremstyle{definition}
\newtheorem{Definition}[Theorem]{Definition}  
\theoremstyle{remark}
\newtheorem{Remark}[Theorem]{Remark} 
\newtheorem{Example}[Theorem]{Example} 
\numberwithin{equation}{section}
\DeclareSymbolFont{cmsymbols}{OMS}{cmsy}{m}{n}
\DeclareSymbolFontAlphabet{\mathcal}{cmsymbols}
\begin{document}


\subjclass{20F05, 20E06, 20E07.}
\keywords{Recursive group, finitely presented group, embedding of group, benign subgroup, free product of groups with amalgamated subgroup, HNN-extension of group}

\title[A modified proof for Higman's embedding theorem]{\large  A modified proof for Higman's embedding theorem}

\author{V. H. Mikaelian
}

\begin{abstract}
We suggest a briefer  version for the proof of Higman's embedding theorem stating that a finitely generated group can be embedded into a finitely presented group if and only if it is recursively presented.
In particular, we shorten the main part of original proof establishing characterization of recursive relations in terms of  benign subgroups in free groups.
Also, the construction we suggest is adaptable for constructive  embeddings of recursive groups into explicitly given finitely presented groups.     
%
\end{abstract}

\date{\today}

\maketitle

\setcounter{tocdepth}{1}

\let\oldtocsection=\tocsection
\let\oldtocsubsection=\tocsubsection
\let\oldtocsubsubsection=\tocsubsubsection
\renewcommand{\tocsection}[2]{\hspace{-12pt}\oldtocsection{#1}{#2}}
\renewcommand{\tocsubsection}[2]{\footnotesize \hspace{6pt} \oldtocsubsection{#1}{#2}}
\renewcommand{\tocsubsubsection}[2]{ \hspace{42pt}\oldtocsubsubsection{#1}{#2}}

{\footnotesize \tableofcontents}

\section{Introduction}

\noindent
This note is a part of our research on Higman embeddings, also including recent articles \cite{Embeddings using universal words} and \cite{The Higman operations and  embeddings}.
Our first purpose here is to suggest a shorter and we hope considerably simpler version of the proof for Higman's remarkable embedding theorem:

\begin{Theorem}[Theorem 1 in~\cite{Higman Subgroups of fP groups}]
\label{TH Higman 1}
A finitely generated group can be embedded in a finitely presented group if and only if it is recursively presented.
\end{Theorem}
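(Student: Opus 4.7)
The plan is to split the argument into the easy direction and the hard direction. The easy direction is straightforward: if a finitely generated $G = \langle X \rangle$ embeds in a finitely presented group $K = \langle Y \mid S \rangle$, then the set of words in $X^{\pm 1}$ representing $1$ in $K$ is recursively enumerable (enumerate all consequences of the finite relator set $S$ and check which are expressible in the generators of $G$), so $G$ itself is recursively presented.

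For the hard direction, I would follow the classical strategy built around the notion of a \emph{benign subgroup}: a subgroup $H$ of a finitely generated group $G$ is benign if $G$ embeds into some finitely presented group $K$ with $H = G \cap L$ for some finitely generated $L \leq K$. The ultimate target is: if $G = F/N$ where $F$ is free of finite rank and $N$ is the normal closure of a recursively enumerable set, then $N$ is benign in $F$, from which an HNN-style construction produces the desired finitely presented overgroup of $G$.

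\textbf{Step 1 (closure properties).} Using amalgamated free products and HNN extensions, I would verify that benignity in a finitely generated group is preserved under finite intersection, finite join, pullback along homomorphisms to finitely generated groups, and passage between a group and a finitely generated overgroup. These lemmas are the combinatorial glue; they reduce the global problem to a single benignity statement in free groups.

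\textbf{Step 2 (characterization theorem).} The core of the proof is to show that a finitely generated subgroup $H$ of a finitely generated free group $F$ is benign if and only if $H$ is recursively enumerable as a subset of $F$. The forward direction is a computability observation inside the witnessing finitely presented group. The reverse direction is the hard part: one must simulate the enumeration procedure for $H$ via a tower of HNN extensions whose stable letters encode the configurations and transitions of a Turing machine (or equivalent device), and then check, via Britton's lemma and careful normal-form analysis, that no unintended identifications occur. This is exactly where the present paper advertises its shortcuts through ``simple combinatorial observations on words of specific type.''

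\textbf{Step 3 (from r.e.\ relators to a finitely presented overgroup).} Given a finitely generated recursively presented $G = F/N$, I would combine Step 2 with the closure results of Step 1 to promote benignity from an r.e.\ \emph{subgroup} to the r.e.\ \emph{normal closure} $N$, and then build a finitely presented group containing $G$ by an HNN construction along the pair $(F,N)$ inside the witnessing finitely presented group from the benignity of $N$.

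The main obstacle I expect is Step 2, specifically the direction r.e.$\Rightarrow$benign; the remaining steps are modular and essentially algebraic, but Step 2 requires encoding computation faithfully inside group-theoretic free constructions, which is both the deepest and the most technically involved part of the argument.
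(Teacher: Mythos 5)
Your overall architecture --- the easy direction by enumerating consequences of a finite relator set, the hard direction through benign subgroups, closure lemmas, a characterization theorem in free groups, and a final HNN-style construction --- is exactly the skeleton of Higman's proof and of this paper (your Step 1 is Lemmas~\ref{LE benign subgroups intersection and union are benign} and \ref{LE benign subgroup has benign image}; your Step 2 is Theorem~\ref{TH Higman 4} and its extension to all finitely generated free groups; your Step 3 is the rope trick). Where you genuinely diverge is the method for the hard half of Step 2: you propose to simulate an enumeration procedure by a tower of HNN extensions whose stable letters encode Turing-machine configurations and transitions. That is the strategy of the Aanderaa/Rotman and Boone--Britton style proofs discussed in the introduction, not Higman's. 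Higman (and this paper) avoid machines entirely: recursive enumerability is first recast via Kleene's characterization into generation from $\Zz$ and $\S$ by the operations \eqref{EQ Higman operations}, and then each single operation is mimicked by one explicit amalgam or HNN extension (Section~\ref{SE The proof of Theorem}). Your route buys independence from Theorem~3 of \cite{Higman Subgroups of fP groups} but costs you a full faithfulness argument for the machine encoding; the operations route is precisely what keeps the present proof short.

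Two concrete gaps. First, you state the Step 2 characterization for \emph{finitely generated} subgroups $H$ of a free group $F$; in that form it is vacuous, since any finitely generated subgroup of a finitely presented group is benign by taking $L=H$ in Definition~\ref{DE benign subgroup}. The theorem you actually need concerns arbitrary subgroups generated by recursively enumerable sets of words (such as $A_{\B}$ for infinite $\B$), and it is exactly the infinite generation that makes the direction r.e.\ $\Rightarrow$ benign hard. Second, Step 3 as written does not produce $G=F/N$: if $N$ is benign in $F$ with witnesses $K\supseteq F$ and $L\le K$, then $K*_L t$ contains $F*_N t$, which still contains $F$ itself rather than the quotient $F/N$, so nothing has been killed. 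The rope trick instead uses benignity of the subgroup $\langle R\rangle$ \emph{generated} by the relators (not their normal closure), after first reducing the presentation to one whose relations all have the form $\big[a_0,h_i\big]=1$; only then does the construction illustrated after Definition~\ref{DE benign subgroup} yield a finitely presented $K*_L t\supseteq G*_H t\cong A$ in which the intended quotient appears as a subgroup. Without that reduction your final step fails; and since Step 2 itself is only announced rather than carried out, the proposal is a correct plan whose central theorem is still owed.
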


Recursive presentation for a group $G$ presumes that there is a presentation $G=\langle x_1, \ldots, x_n \mathrel{|} r_1, r_2, \ldots\, \rangle$ such that the set of defining relations $\{r_1, r_2, \ldots\}$ is recursive enumerable, i.e., it is the range of some recursive function.
This result establishes deep connections between the logical notion of recursion and theory of group.
For its applications we refer to 
Ch.~12 of \cite{Rotman},
Ch.~IV of \cite{Lyndon Schupp},
Ch.~I, IV, VI in \cite{Baumslag Topics in Combinatorial}, and to the work of Ol'shanskii and Sapir \cite{Ol'shanski Sapir The conjugacy problem}.

\medskip
And the second purpose of the   construction we suggest is that it is adaptable for \textit{explicit}  embeddings of recursive groups into finitely presented groups. In particular, using this embedding we in 
\cite{On explicit embeddings of Q} construct explicit embeddings of the group $\Q$ into finitely presented groups. 
The question of the existence of such group embeddings is posed in Problem 14.10\;(a) in Kourovka Notebook in 1999 \cite{kourovka}, see points 1.1, 1.2 in \cite{On explicit embeddings of Q} for details.

\subsection{The main steps of Higman's proof}
\label{SU The main steps of Higman's proof}
\cite{Higman Subgroups of fP groups} starts by Kleene's formal characterization of partial recursive functions on the set of non-negative integers 
as the class of
functions that can be obtained from the {zero}, 
 {successor} 
and  
{identity} functions
using the operations of  composition, primitive recursion, and minimization (see \cite{Davis, Rogers} or the newer text \cite{Boolos Burgess Jeffrey}).
Let $\Z$ be the set of all integers, and $\mathcal E$ be the set of all functions $f : \Z \to \Z$ with finite support in the sense that $f(i)=0$ for all but finitely many integers $i\in \Z$. To each such function $f\in \E$ one can constructively assign a unique non-negative  G\"odel number.
This makes $\mathcal E$ an effectively enumerable set, and one may put into correspondence to each subset $\B$ of $\E$ the respective set of non-negative integers corresponding to the functions $f\in \B$.
Then $\B$ is \textit{recursively enumerable}, if that set is the \textit{range} of some recursive function. 
Next, Section 2 in \cite{Higman Subgroups of fP groups} gives a different characterization  for recursively enumerable subsets of $\E$: two initial subsets $\Zz$ and $\S$ of $\E$, and a series of operations on subsets of $\E$ are introduced (see \eqref{EQ Higman operations} in \ref{SU The Higman operations on subsets} below), and  Theorem 3 states that $\B$ is recursively enumerable if and only if it can be obtained from $\Zz$ and $\S$ by operations \eqref{EQ Higman operations}.
Only after these preparations the  group-theoretical argument starts in~\cite{Higman Subgroups of fP groups}. The key concept of \textit{benign subgroup} is introduced (see \ref{SU Definition and main properties of benign subgroups}), and to each $\B$ a specific subgroup $A_\B$ of the free group $F=\langle a,b,c \rangle$ of rank $3$ is put into correspondence. The main result of Section 3 and of Section 4 in \cite{Higman Subgroups of fP groups} is:

\begin{Theorem}[Theorem 4 in~\cite{Higman Subgroups of fP groups}]
\label{TH Higman 4}
The subset $\mathcal B$ of $\mathcal E$ is recursively enumerable if and only if $A_{\mathcal B}$ is a benign subgroup in $F=\langle a,b,c \rangle$.
\end{Theorem}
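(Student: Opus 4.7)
The plan is to prove the two directions separately. For the ``only if'' direction, I would use Theorem~3 to proceed by induction on the number of Higman operations needed to obtain $\mathcal{B}$ from the initial subsets $\mathcal{Z}$ and $\mathcal{S}$. The base cases require exhibiting explicit finitely presented overgroups of $G = \langle a, b, c \rangle$ in which $A_{\mathcal{Z}}$ and $A_{\mathcal{S}}$ appear as intersections of $G$ with finitely generated subgroups; both presentations can be written down directly from the combinatorial description of the functions in $\mathcal{Z}$ and $\mathcal{S}$. For the inductive step, one processes each Higman operation from~\eqref{EQ Higman operations} separately: given finitely presented overgroups $H_i$ witnessing benignness of the input subgroups $A_{\mathcal{B}_i}$, one builds a new finitely presented overgroup $H$ for $A_{\mathcal{B}}$ by amalgamating the $H_i$ over $G$ and then performing HNN extensions whose stable letters and associated subgroups are chosen to mirror the operation's effect on the G\"odel-coded functions in $\mathcal{E}$.

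For the ``if'' direction, I would assume $A_{\mathcal{B}} = K \cap G$ where $K$ is finitely generated inside a finitely presented overgroup $H \supseteq G$. Since $H$ is finitely presented, its trivial word problem is recursively enumerable, and consequently so is the set of words in the generators of $G$ that coincide in $H$ with some word in the given generators of $K$. This yields a recursive enumeration of $A_{\mathcal{B}}$ as a subset of $G$; composing with the fixed bijective correspondence between elements of $\mathcal{E}$ and a distinguished family of elements of $G$ used to define $A_{\mathcal{B}}$ then translates this into a recursive enumeration of $\mathcal{B}$.

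The hard part is the inductive step of the ``only if'' direction, where each Higman operation demands a specific and rather delicate construction. The operations that introduce new coordinates or that ``merge'' several inputs into one are the most intricate: one must arrange the stable letters and their associated subgroups so that the resulting intersection with $G$ is \emph{exactly} $A_{\mathcal{B}}$, without any spurious elements produced by Britton reductions in $H$. Verifying this no-spurious-element property is precisely where Higman's original proof invokes several technical lemmas on homomorphisms between free products with amalgamation, and it is here that the present paper promises a simplification by replacing those lemmas with direct combinatorial observations on normal forms of words of a specific type.
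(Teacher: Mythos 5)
Your plan coincides with the paper's: the hard direction is handled exactly as you describe, by taking Theorem~3 as given and showing that the family of $\mathcal B$ with $A_{\mathcal B}$ benign contains $\mathcal Z$ and $\mathcal S$ and is closed under each Higman operation (via amalgamated products over $G$ and HNN-extensions whose stable letters mimic the operation), while the converse is the routine enumeration argument you sketch, which the paper does not even spell out. Be aware, though, that your inductive step is so far only a statement of intent --- the actual content of Section~4 is the explicit per-operation constructions and the normal-form verifications that no spurious elements arise, above all for $\omega_m$, and none of that is supplied by the proposal as written.
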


Then in final brief Section 5 the above theorem is generalized:
a subgroup of \textit{any} finitely generated free group is recursively enumerable if and only if it is benign. The ``Higman Rope Trick'', as it is sometimes 
called~\cite{Higman rope trick}, closes the proof by showing how a recursively presented group can be embedded in a finitely presented group.
Here we suggest a shorter proof for Theorem~\ref{TH Higman 4}, and our sections \ref{SE The main properties and basic examples of benign subgorups}, \ref{SE The proof of Theorem} roughly correspond to sections 3, 4 in \cite{Higman Subgroups of fP groups}.


\subsection{Comparison of the current modification with~\cite{Higman Subgroups of fP groups}}
Leaving aside minor changes, here are the main modifications done to Higman's original construction.

We construct a very different group to show that
the set of all subsets $\mathcal B \subseteq \E$, for which the subgroup $A_{\mathcal B}= \langle a_f \;|\; f \in \mathcal B\, \rangle$ is benign, is closed under Higman operation $\omega_m$. In \cite{Higman Subgroups of fP groups} this is done by the group $M *_{\langle a,b,c \rangle} K$ built using Lemma 4.10, Lemma 3.9 and Lemma 3.10, which perhaps form the most complicated part in \cite{Higman Subgroups of fP groups}. Compare them with the much shorter part ``\textit{$\BB$ is closed under  $\omega_m$}\!''  in Section~\ref{SE The proof of Theorem} below.

Three lemmas 3.2--3.4 on homomorphisms are extensively used in~\cite{Higman Subgroups of fP groups} to study subgroups in free constructions. We obtain similar results using really simple combinatorics on words, and  trivial observations made in \ref{SU The conjugates collecting process}. Compare, for instance, 
Lemma 3.8 and Lemma 3.9 of ~\cite{Higman Subgroups of fP groups} with direct short proofs in examples listed in \ref{SU Basic examples of benign subgroups} below.

We argument some passages that are just stated in \cite{Higman Subgroups of fP groups}. In particular, 
we do prove the passage 
\textit{``These two conditions are easy, but a little tedious, to check, and this is left to the reader''} on p.\,472 in  \cite{Higman Subgroups of fP groups},  see the part ``\textit{$\BB$ is closed under  $\omega_m$}\!'' below.
Also, we accompany all concepts by examples which make understanding less tedious, we hope. 
%
In fact, we could shorten the proofs even more by involving some wreath product methods from \cite{Subnormal embedding theorems}--\cite{Subvariety structures}. However, we intentionally keep  technique within free products with amalgamations and  HNN-extensions to preserve Higman's original idea about investigation of recursion via free constructions.


\subsection{Other proofs for Higman's embedding theorem}

Higman's original proof is very complicated, and this motivated others to suggest alternative proofs for Theorem~\ref{TH Higman 1}. 
Lindon and Schupp present in Section IV\,\!.\,\!7 of \cite{Lyndon Schupp} a proof related to Valiev's approach \cite{Valiev}.
Solving Hilbert's Tenth Problem \cite{Matiyasevich Diophantine} establishes  that a subset of $\Z^n$ is recursively enumerable if and only if it is Diophantine.
The proof in \cite{Lyndon Schupp}  uses this Diophantine characterization, i.e., it relies on a ``third party'' result  (which itself demands a complicated proof). 

A proof reflecting  Aanderaa's  work \cite{Aanderaa} is given by Rotman in Ch.\,12 of \cite{Rotman}. It applies the auxiliary construction  interpreting Turing machines via semigroups, and also uses the Boone-Britton group  \cite{Boone Certain simple unsolvable problems, Britton The word problem}. Group diagrams allow \cite{Rotman} to shorten the proof of \cite{Aanderaa}.

Besides the mentioned two well known textbooks \cite{Lyndon Schupp, Rotman}, other proofs can be found in the Appendix of Shoenfield's textbook on logic  \cite{Shoenfield}, in the article of Adyan and Durnev \cite{Adyan Durnev}, etc. Typically, they relay on auxiliary  constructions, such as those  built to show undecidability of the word problem in semigroups \cite{Post Recursive unsolvability, 
Markov impossibility of certain algorithms} and in  groups
\cite{Novikov on algorithmic undecidability, 
Boone Certain simple unsolvable problems, 
Britton The word problem}.

\smallskip
This comparison stresses importance of Higman's straightforward approach of \textit{mimicking recursion} by means of some elegant group-theoretical constructions only,  without resorting to ``third party'' concepts.
A motivation of our note is to emphasize that Higman's original idea still is one of the clearest approaches to handle this subject.

\medskip
We would like to also announce the recent article \cite{The Higman operations and  embeddings} in which we use the current embedding as a basis to suggest an algorithms, called $H$-Machine, which explicitly builds the subset $\B$ of $\E$
for certain classes of groups,
such as the free abelian, metabelian, soluble, nilpotent groups, the groups
$\Q$ and $\mathbb C_{p^\infty}$, divisible abelian groups, etc.


\subsection*{Acknowledgements}
The current work is supported by the 
21T-1A213 grant of SCS MES RA.
I would like to thank  Yerevan State University  for the Teaching Excellence award and grant for 2021.
The first steps of this work was announced at the International Algebraic Conference in memory of A.G.~Kurosh, May 23--25, 2018.


\section{Notations and preliminary information}

\subsection{Sets of integer-valued funations}
\label{SU Sets of integer-valued funations}

In~\ref{SU The main steps of Higman's proof} we introduced the set  $\mathcal E$ of all functions $f : \Z \to \Z$ with finite supports.
If $f(i)=0$ for any $i<0$ and $i\ge n$, we may record such a function as $f=(a_0,\ldots,a_{n-1})$ assuming $f(i)=a_i$ for $i=0,\ldots,n\!-\!1$. Say,
$f=(0,0,7,-8,5,5,5,5)$
means 
$f(2)=7$, $f(3)=-8$, 
$f(i)=5$ for $i=4,\ldots,7$, and 
$f(i)=0$ for any $i<2$ or $i\ge 8$.
Denote by $f=(0)$ the constant zero function on $\Z$.
For a fixed integer $m$ and for an $f\in \mathcal E$  denote the functions $f_{m}^+$ and $f_{m}^-$ as follows. 
$f_{m}^+(i)=f_{m}^-(i)=f(i)$ for all $i\!\neq\! m$, and   
$f_{m}^+(m) = f(m) +1$,\; 
$f_{m}^-(j) = f(m) -1$.
When $m$ is given by the context, we may shorten these to $f_{m}^+=f^+$ and $f_{m}^-=f^-$. Say, for the above $f$ we get $f_{5}^+\!\!=(0,0,7,-8,5,6,5,5)$, and 
$f^-\!\!=(0,0,7,-8,5,5,5,4)$, provided $m=7$ is given.


\subsection{The Higman operations on subsets of $\E$}
\label{SU The Higman operations on subsets}

Define some specific subsets of $\E$:
$$
\Zz\!=\!\big\{(0)\big\},\;\;  
\S\!=\!\big\{(a, a+\!1) \mathrel{|} a\!\in\! \Z\big\},\;\; 
\E_m\!\!=\!\big\{
(a_0,\ldots,a_{m-1})
\mathrel{|} a_i\!\in \!\Z,\; i\!=\!0,\ldots,\!m-1\big\},
\, m\!=\!0,\!1,\ldots
$$
($\E_0$ consists of zero function only).
We will use the following  operations from \cite{Higman Subgroups of fP groups}:
\begin{equation}
\label{EQ Higman operations}
\tag{H}
\iota,\; \upsilon;\quad\quad
\rho,\; \sigma,\; \tau,\; \theta,\; \zeta,\; \pi,\; \omega_m \;\; (\text{for each $m=1,2,\ldots$})
\end{equation}
which we call \textit{Higman operations}. 
For any subsets $\mathcal A, \mathcal B$ of $\E$ define 
$\iota(\mathcal A,\mathcal B)=\mathcal A \cap \mathcal B$ and 
$\upsilon(\mathcal A,\mathcal B)=\mathcal A \cup \mathcal B$.
The rest of Higman operations  are unary functions on the subsets of $\mathcal E$:

$f \in \rho(\mathcal A)$ iff there is a $g\in \mathcal A$ such that $f(i)=g(-i)$,

$f \!\in\! \sigma(\mathcal A)$ iff there\! is\! a\! $g\in \mathcal A$ such\! that\! $f(i)=g(i-1)$,

$f\! \in \tau(\mathcal A)$ iff there is a $g\!\in\! \mathcal A$ such that $f(0)\!=\!g(1)$, $f(1)\!=\!g(0)$ and  $f(i)\!=\!g(i)$ for $i\!\neq\! 0,1$,

$f \in \theta(\mathcal A)$ iff there is a $g\in \mathcal A$ such that $f(i)=g(2i)$,

$f \in \zeta(\mathcal A)$ iff there is a $g\in \mathcal A$ such that $f(i)=g(i)$ for  $i\neq 0$,

$f \in \pi(\mathcal A)$ iff there is a $g\in \mathcal A$ such that $f(i)=g(i)$ for  $i\le 0$,

$f \in \omega_m(\mathcal A)$ iff for every $i\in \Z$ there is a  $g = \big(f(mi),\;f(mi+1),\ldots,f(mi+m-1)\big)\in \mathcal A$. Since  the support of any $f\in \E$ is finite, either $\mathcal A$ contains the zero function, or $\omega_m(\mathcal A)=\emptyset$.

\smallskip
The more general operations $\alpha$ and $\lambda$ mentioned \cite{Higman Subgroups of fP groups} are not used here.

\begin{Example} 
\label{EX simple Higman operations}
Some simple applications of Higman operations are easy to check:

$\rho(\S)
=\big\{f\in \E \mathrel{|}
f(-1)=f(0)+1,  f(i)=0 \text{ for } i\neq -1,0
\big\}
$.

Thus, $\sigma\rho(\S)=\big\{(a+\!1,a) \mathrel{|} a\!\in\! \Z\big\}=\tau(\S)$.

Since $\zeta(\Zz)=
\big\{(a) \mathrel{|} a\in \Z\big\}=
\big\{(a,0) \mathrel{|} n\in \Z\big\}$, we have $\iota\big(\!\tau(\S),\zeta(\Zz)\big)=\big\{(1,0)\big\}=\big\{(1)\big\}$.

As $\sigma \zeta (\Zz)= \big\{(0,a) \mathrel{|} a\in \Z\big\}$, we have
$\zeta (\sigma \zeta)^{2} (\Zz)=
\big\{ (a_0,a_1,a_2) \mathrel{|} a_0,a_1,a_2\in \Z\big\}
= \E_3$.

Since $\pi (\Zz)=\{f\in \E \mathrel{|}\
f(i)=0 \text{ for } i\le 0
\}$, then
$\sigma^3 \rho \pi (\Zz)=\{f\in \E \mathrel{|}\
f(i)=0 \text{ for } i\ge 3
\}$.

Clearly, $\theta (\E_4)=
\big\{ (a_0,a_2) \mathrel{|} \text{ for each }  (a_0,a_1,a_2,a_3)\in \E_4\big\}
= \E_2$.

$\omega_2\big(\!\upsilon(\S, \Zz)\big)=\{f\in \E \mathrel{|}\
f(2i+1)=f(2i)+1 \text{ \,or\,} f(2i+1)=f(2i)=0 \text{ for any } i\in \Z
\}$. Say, $(7,8,\;0,0,\; 2,3)\in \omega_2\big(\!\upsilon(\S, \Zz)\big)$ because $(7,8), (2,3)\in \S$ and $(0,0)\in \Zz$.
Also, $\omega_2(\S)=\emptyset$.

\end{Example}

\subsection{Notations for groups and homomorphisms}
\label{SU Notations for groups and homomorphisms}

Below we may use the functions  $f\in \mathcal E$ to define specific elements in free groups. 
Here is a typical context: if $F=\langle a,b,c \rangle$ is a free group of rank $3$, then the elements $b_i = b^{c^i} = c^{-i} b\, c^i$,\, $i\in \Z$, are generating a free subgroup  of countable rank in $F$. 
For every  $f\in \E$ we can define the element
$b_f= \cdots b_{-1}^{f(-1)} b_{0}^{f(0)} b_{1}^{f(1)} \cdots$ 
In particular, when $f=(a_0,\ldots,a_{m-1})\in \mathcal E$, we  define $b_f= b_0^{f(0)}\cdots b_{m-1}^{f(m-1)}$\!.\,
Say, for $f=(2,5,3)$ we have 
$b_f= b_0^{2}\, b_1^{5} \, b_2^{3}
= (b^{c^0})^2
(b^{c^1})^5
(b^{c^2})^3
=b^2 c^{-1} b^5 c^{-1} b^3 c^2
$.
Further, the elements of type $a_f=a^{b_f}$ generate yet another free subgroup in $F$.

\smallskip
If $G = \langle\, X \mathrel{|} R \,\rangle$ is the presentation of the group $G$ by its generators $X$ and definning relations $R$, then for an alphabet $Y$ (disjoint from $X$) and for any set $S\subseteq F_Y$ of words on $Y$ we denote
$\langle G, Y \mathrel{|} S \,\rangle = \langle \,X \cup Y \mathrel{|}  R \cup S \,\rangle$ 
(the cases when $X = \emptyset$ or $S = \emptyset$ are not ruled out).

If $\varphi:G\to H$ is a homomorphism defined on any group $G=\langle a,b,\ldots \rangle$ by the images $\varphi(a)=a'$,  $\varphi(b)=b',\ldots$\; of its generators $a,b,\ldots$\,, we may for briefness refer to $\varphi$ as \textit{the homomorphism sending} $a,b,\ldots$ \;to\; $a',b',\ldots$

\subsection{Notations for free constructions}
\label{SU Notations for free constructions}

We are going to extensively use the operations of free products of groups with amalgamated subgroups and of HNN-extensions of groups by one or more stable letters.
Referring for background information to \cite{Lyndon Schupp, Bogopolski,Rotman}, here we just state the notations we use.

If the groups $G$ and $H$ have subgroups, respectively, $A$ and $B$ isomorphic under the isomorphism $\varphi : A \to B$, then we denote the (generalized) free product $\Gamma = \langle G, H \mathrel{|} a=a^{\varphi} \text{ for all $a\in A$}\, \rangle$ of $G$ and $H$ with amalgamated subgroups $A$ and $B$ by
$G*_{\varphi} \!H$. In the simplest case, when $G$ and $H$ are overgroups of the same subgroup $A$, and $\varphi$ is just the \textit{identical} isomorphism on $A$, we may prefer to write $\Gamma = G*_{A} H$.

If the group $G$ has subgroups $A$ and $B$ isomorphic under the isomorphism $\varphi : A \to B$, then we denote the HNN-extension $\Gamma = \langle G, t \mathrel{|} a^t=a^{\varphi} \text{ for all $a\in A$}\, \rangle$ of the base $G$ 
by the stable letter $t$
with respect to the isomorphism 
$\varphi$ by
$G*_{\varphi} t$.
In case when $A=B$ and $\varphi$ is  \textit{identity} on $A$, we may write $\Gamma = G*_{A} t$.
We also use HNN-extensions with more than one stable letters. If we have the isomorphisms $\varphi_1: A_1 \to B_1,\; \varphi_2: A_2 \to B_2,\ldots$ for pairs of subgroups in $G$,  we denote the respective HNN-extension $\langle G, t_1, t_2,\ldots \mathrel{|} a_1^{t_1}=a_1^{\varphi_1}\!\!,\; a_2^{t_2}=a_2^{\varphi_2}\!\!,\,\ldots\; \text{ for all $a_1\in A_1$, $a_2\in A_2,\ldots$}\, \rangle$ by $G *_{\varphi_1, \varphi_2, \ldots} (t_1, t_2, \ldots)$. 
%
Our usage of the \textit{normal forms} in
free constructions is closer to~\cite{Bogopolski}.

\subsection{Subgroups in free constructions}
\label{SU Subgroups in free constructions}

Lemma~\ref{LE subgroups in amalgamated product} 
is a slight variation of Lemma 3.1 given in \cite{Higman Subgroups of fP groups} without a proof as ``obvious from the normal form theorem for free products with an amalgamation''.
Lemma~\ref{LE subgroups in HNN-extension}
is its analog for HNN-extensions.
See \cite{Auxiliary free constructions for explicit embeddings} for proof details.

\begin{Lemma}
\label{LE subgroups in amalgamated product}
Let $\Gamma = G *_\varphi \!H$ be the free product of the groups $G$ and $H$ with amalgamated subgroups
$A \le G$ and $B \le H$
with respect to the isomorphism 
$\varphi: A \to B$.
If $G', H'$ respectively are subgroups of $G, H$, such that
for $A'=G'\cap A$ and $B'=H'\cap B$ 
we have  
$\varphi (A') = B'$, then for the subgroup $\Gamma'=\langle G',H'\rangle$  of $\Gamma$ and for the restriction $\varphi'$ of $\varphi$ on $A'$ we have:
\begin{enumerate}
\item 
\label{PO 1 LE subgroups in amalgamated product}
$\Gamma' = G'*_{\varphi'} H'$,

\item 
\label{PO 2 LE subgroups in amalgamated product}
$\Gamma' \cap A = A'$ and $\Gamma' \cap B=B'$,

\item 
\label{PO 3 LE subgroups in amalgamated product}
$\Gamma' \cap G = G'$ and $\Gamma' \cap H = H'$.
\end{enumerate}
\end{Lemma}

\begin{Corollary} 
\label{CO NEW G*H lemma corollary}
Let $\Gamma = G*_{A} H$,
and let $G'\le G$, $H'\le H$ be subgroups for which $G' \cap\, A = H' \cap\, A$. Then for $\Gamma'=\langle G', H'\rangle$ and $A' = G' \cap\, A $ we have:

\begin{enumerate}

\item 
\label{PO 1 in CO NEW G*H lemma corollary}
$\Gamma'= G' \!*_{A'} H'$, in particular, if  $A\le G'\!,\,H'$, then $\Gamma'= G' *_{A} H'$;

\item 
\label{PO 2 in CO NEW G*H lemma corollary}
$\Gamma' \cap\, A = A'$, in particular, if  $A\le G'\!,\,H'$, then $\Gamma' \cap\, A = A$\,;

\item 
\label{PO 3 in CO NEW G*H lemma corollary}
$\Gamma' \cap\, G = G'$ and 
$\Gamma' \cap\, H = H'$.

\end{enumerate}
\end{Corollary}

\begin{Lemma}
\label{LE subgroups in HNN-extension}
Let $\Gamma=G *_{\varphi} t$ be the HNN-extension of the base group $G$ by the stable letter $t$
with respect to the isomorphism 
$\varphi: A \to B$
of the subgroups 
$A, B \le G$.
If $G'$ is a subgroup of $G$  such that
for $A'=G'\cap\, A$ and $B'=G'\cap\, B$ 
we have  
$\varphi (A') = B'$, then for the subgroup $\Gamma'=\langle G',t\rangle$  of $\Gamma$ and for the restriction $\varphi'$ of $\varphi$ on $A'$ we have:
\begin{enumerate}
\item 
\label{PO1 LE subgroups in HNN-extension}
$\Gamma' =G' *_{\varphi'} t$,

\item 
\label{PO2 LE subgroups in HNN-extension}
$\Gamma' \cap G = G'$,

\item 
\label{PO3 LE subgroups in HNN-extension}
$\Gamma' \cap \,A = A'$ \;and\; $\Gamma' \cap B = B'$.
\end{enumerate}
\end{Lemma}

\begin{Corollary} 
\label{CO NEW G*t lemma corollary}
Let $\Gamma = G*_{A} t$,
and let $G'\le G$ be a subgroup. Then for $\Gamma'=\langle G', t\rangle$ and $A' = G' \cap\, A $ we have:

\begin{enumerate}

\item 
\label{PO 1 in CO NEW G*t lemma corollary}
$\Gamma'= G' \!*_{A'} t$, in particular, if  $A \le G'$, then $\Gamma'= G' *_{A} t$;

\item 
\label{PO 2 in CO NEW G*t lemma corollary}
$\Gamma' \cap\, A = A'$, in particular, if  $A\le G'$, then $\Gamma' \cap\, A = A$\,;

\item 
\label{PO 3 in CO NEW G*t lemma corollary}
$\Gamma' \cap\, G = G'$.

\end{enumerate}
\end{Corollary}

\begin{Remark} 
\label{RE about multiple stable letters}
It is easy to adapt Lemma~\ref{LE subgroups in HNN-extension} for the case of multiple stable letters $t_1, \ldots, t_k$. And that adaptation will be especially simple, if all the stable letters $t_1, \ldots, t_k$ just fix the \textit{same} subgroup $A$ in $G$. In such a case, say, point~\eqref{PO 1 in CO NEW G*t lemma corollary} in Corollary~\ref{CO NEW G*t lemma corollary} will read: 
$\Gamma'= G' \!*_{A'} (t_1, \ldots, t_k)$ for $\Gamma'=\langle G', t_1, \ldots, t_k\rangle$ and for $A' = G' \cap\, A$. 
\end{Remark}

\subsection{The ``conjugates collecting'' process}
\label{SU The conjugates collecting process} 

Let $\X$ and $\Y$ 
be any disjoint subsets in a group $G$. It is easy to see that any element $w\in \langle \X,\Y \rangle$ can be written as:
\begin{equation}
\label{EQ elements <X,Y>}
w=u\cdot v
= x_1^{\pm v_1}
x_2^{\pm v_2}
\cdots
x_{k}^{\pm v_k}
\cdot
v
\end{equation}
with some $v_1,v_2,\ldots,v_k,\, v\in\langle \Y \rangle$, and 
$x_1,x_2,\ldots,x_k \in \X$.
Indeed, since $w$ is in $\langle \X,\Y \rangle$, present $w$ as a product of elements from $\X$, from $\Y$ and of their inverses. Then by grouping where necessary some elements from $\Y$, and by adding some trivial elements we can rewrite it as:
\begin{equation}
\label{EQ preliminary step}
w=z_1^{\vphantom8} x_1^{\pm1} 
z_2^{\vphantom8} \, x_2^{\pm1} z_3^{\vphantom8}
\cdots
z_k^{\vphantom8}\, x_k^{\pm1} z_{k+1}^{\vphantom8}
\end{equation}
where $x_1,\ldots,x_k \in \X$ and 
$z_1,\ldots,z_{k+1} \in \langle \Y \rangle$.
For instance, if $\X =\{x_1, x_2, x_3\}$ and  
$\Y =\{y_1, y_2\}$, then 
$w=x_2^{-1} y_1^3  y_2^{\vphantom8} x_1^2 x_3^{\vphantom8}$ can be rewritten as 
$w=z_1\, x_2^{-1} z_2^{\vphantom8} x_1 z_3^{\vphantom8} x_1
z_4^{\vphantom8}
x_3^{\vphantom8}
z_5^{\vphantom8}
$,
where
$z_1=1$,\,
$z_2=y_1^3  y_2^{\vphantom8}$,\,
$z_3=z_4=z_5=1$ are in $\langle \Y \rangle$. Next, \eqref{EQ preliminary step} can be transformed to:
$$w=z_1^{\vphantom8} x_1^{\pm1}
z_1^{-1} \!\cdot\,
z_1^{\vphantom8}
z_2^{\vphantom8} \, x_2^{\pm1}
(z_1^{\vphantom8}
z_2^{\vphantom8})^{-1}
\!\cdot\,
z_1^{\vphantom8}
z_2^{\vphantom8}
z_3^{\vphantom8}
\cdots
(z_1^{\vphantom8}\!\cdots z_k^{\vphantom8})\, x_k^{\pm1}
(z_1^{\vphantom8}\!\cdots z_k^{\vphantom8})^{-1}
z_1^{\vphantom8}\!\cdots z_k^{\vphantom8} z_{k+1}^{\vphantom8},
$$
which is \eqref{EQ elements <X,Y>} for
$v_1^{\vphantom8}\!=z_1^{-1}$\!\!\!, \;
$v_2^{\vphantom8}\!=(z_1z_2)^{-1}$\!\!\!\!, \; 
$v_3^{\vphantom8}\!=(z_1 z_2 z_3)^{-1}$\!\!\!\!, \, $\ldots$
$v_k^{\vphantom8}\!=(z_1 \cdots z_k)^{-1}$\!\!\!\!, \;  
$v\!=z_1^{\vphantom8}\!\cdots\; z_k^{\vphantom8} z_{k+1}^{\vphantom8}$.

In particular, setting $\X=\{x\}$ and $\Y=\{y\}$ in a 2-generator group $G=\langle x, y\rangle$ we can present any element $w \in G$ as a product
of some conjugates of $x$ and of some power of $y$:
\begin{equation}
\label{EQ elements from <x,y>}
w=x^{\pm y^{n_1}}\!x^{\pm y^{n_2}}\!\!\cdots\, x^{\pm y^{ n_s}}\!\!\cdot y^k=\!u\cdot v\,.
\end{equation}


\section{The main properties and basic examples of benign subgroups}
\label{SE The main properties and basic examples of benign subgorups}

\subsection{Definition and main properties of benign subgroups}
\label{SU Definition and main properties of benign subgroups}
The central group-theoretical notion used in \cite{Higman Subgroups of fP groups} to construct embeddings into finitely presented groups is the concept of benign subgroups. Higman gives three equivalent definitions of which we use the first one:

\begin{Definition}
\label{DE benign subgroup}
A subgroup $H$ of a finitely generated group $G$ is called a \textit{benign subgroup} in $G$, if $G$ can be embedded in a finitely presented group $K$ which has a finitely generated subgroup $L$ such that $G \cap L = H$.
\end{Definition} 

As a first evident example of a benign subgroup one may take any finitely generated subgroup $H$ in any finitely presented group $G$. Then we just have to choose $L\!=\!H$.
%
%
It is interesting to compare Definition~\ref{DE benign subgroup} with Theorem V (a variation of the well-known theorem on embeddings of countable groups into 2-generator groups) in earlier article \cite{HigmanNeumannNeumann}.

\medskip

The following properties have simple proofs covered in \cite{Higman Subgroups of fP groups} by Lemma~3.6 and Lemma~3.7:

\begin{Lemma}
\label{LE benign subgroups intersection and union are benign}
Let $G$ be a finitely generated group with two benign subgroups  $H,\,H'$. Then:
\begin{enumerate}
\item $H \cap H'$  is benign in $G$,
\label{PO 1 LE benign subgroups intersection and union are benign}

\item 
\label{PO 2 LE benign subgroups intersection and union are benign}
$\langle H, H' \rangle$ is benign in $G$.
\end{enumerate} 
\end{Lemma}

\begin{Lemma}
\label{LE benign subgroup has benign image}

Let $G,\, N$ be finitely generated groups both possessing embeddings into some finitely presented groups, and let $\varphi : G \to N$ be any homomorphism. Then:
\begin{enumerate}
\item 
\label{PO 1 LE benign subgroup has benign image}
if a subgroup $H$ is benign in $G$, then its image $\varphi(H)$ is benign in $N$,

\item 
\label{PO 2 LE benign subgroup has benign image}
if a subgroup $H$ is benign in $N$, then its (complete) preimage $\varphi^{-1}(H)$ is benign in $G$.
\end{enumerate} 
\end{Lemma}

\begin{Remark}
\label{LE benign in two subgroups}
It is trivial to verify by Definition~\ref{DE benign subgroup} that the subgroup $H$ of a finitely generated group $G$ is benign in $G$, provided that $H$ is benign in a finitely generated group $G'$ containing $G$. 
On the other hand, if we additionally require that $G'$ (and, therefore, also $G$) can be embedded into a finitely presented group, then 
$H$ is benign in $G'$, provided that $H$ is benign in $G$. To see this just apply
\eqref{PO 1 LE benign subgroup has benign image} in 
Lemma~\ref{LE benign subgroup has benign image} to the identical embedding $G \to G'$.
Combining these facts, let us following \cite{Higman Subgroups of fP groups} agree to call $H$ \textit{benign} (without specifying in \textit{which} group $G$), whenever there is a finitely presented overgroup containing the groups discussed.
\end{Remark}

\subsection{Examples of benign subgroups}
\label{SU Basic examples of benign subgroups}

Let us collect some examples for later use. We intentionally bring them in an order displaying consecutive development of some basic ideas.
Take the free group $F=\langle a,b,c \rangle$, its subgroup 
$F_0=\langle b,c \rangle$, and denote $b_i=b^{c^i}$ for $i \in \Z$ as in subsection \ref{SU Notations for groups and homomorphisms}.

\begin{Example} 
\label{EX b all powers is benign}
The subgroup $A_0= \langle b_i \mathrel{|} 
i\in \Z \,\rangle$ is benign in $F_0$.
Indeed, let $\varphi$ be the isomorphism of $F_0$ sending $b,c$ to $b^c\!,c$.
Then $\Gamma\!=\! F_0\, \!*_{\varphi}\! t$ is finitely presented, and it is enough to verify 
$F_0 \cap \langle b, t \rangle \!= A_0$.
Each $b_i\!=b^{c^i}\!\!=b^{t^i}$ is in $F_0 \cap \langle b, t \rangle$.
Next, applying \ref{SU The conjugates collecting process}
to any $w\in \langle b, t \rangle$
for $\X=\{b \}$
and $\Y=\{t\}$, we get $w = u \cdot v$ where $u$ is a product of factors of type $b^{\pm t^{i}}\!\!= b_i^{\pm 1}$\!, and $v=t^k$. If also $w\in F_0$, then $v=1$ by uniqueness of the normal form, i.e., $w\in A_0$.
\end{Example}

\begin{Example} 
\label{EX b SOME powers is benigh}
The subgroup $A_{k,l}=\langle b_{n} \mathrel{|} 
n= ik+l,\;i\in \Z\,
\rangle$ is benign in $F_0$ for any $k,l=1,2,\ldots$ To see this just apply to $A_{\,0}$ the homomorphism $\varphi$ sending $b,\,c$ to $b^{c^l}\!\!,\;c^k$. 
Since $A_{k,l}=\varphi(A_0)$, it remains to use 
\eqref{PO 1 LE benign subgroup has benign image} in 
Lemma~\ref{LE benign subgroup has benign image}.
\end{Example}


\begin{Example}
\label{EX B_0 is benign}
The subgroup $B_0=\langle b_i \mathrel{|} i\ge 0 \rangle$ is benign in $F_0$.
Start the proof by denoting $F'=\langle
b_i \;|\; \text{$i$\, is odd} \rangle$ and 
$F''=\langle
b_i \;|\; \text{$i$\, is even} \rangle$, and observing that $F'*F'' = A_0 = \langle b_i \;|\;  i \in \Z\rangle$.
Construct a transversal to $F'$ in $A_0$ using the fact that $A_0$ is \textit{freely} generated by all $b_i$. Write any word $w\in A_0$ as:
\begin{equation}
\label{EQ the worm of w OLD}
w=b_{n_1}^{\varepsilon_1}\!\cdots b_{n_{r-1}}^{\varepsilon_{r-1}}\,\cdot\,
b_{n_r}^{\varepsilon_r}\!\cdots b_{n_s}^{\varepsilon_s},
\end{equation}
and mark $r$ to be the first \textit{even} index, i.e., all  $n_1,\ldots,n_{r-1}$ are odd (the case $r=1$ is not ruled out). Then $b_{n_1}^{\varepsilon_1}\!\cdots b_{n_{r-1}}^{\varepsilon_{r-1}} \in F'$, and the words or type $b_{n_r}^{\varepsilon_r}\!\cdots b_{n_s}^{\varepsilon_s}$ with even starting index $n_r$ form the transversal $T_{F'}$ to $F'$ in $A_0$.

Next, show that some $u\in F_0$ is trivial modulo $F'$ if and only if it is trivial modulo $B=\langle F', c^2 \rangle$. Indeed, applying \ref{SU The conjugates collecting process} to $u\in B$ for $\X=\{b_i \;|\; \text{$i$\, is odd}\}$ and $\Y=\{c^2\}$, we can write it as
$u=
b_{i_1}^{\pm (c^2)^{n_1}}\!\!\cdots\, 
b_{i_s}^{\pm (c^2)^{n_s}}\! \cdot c^{2k}
\;=\;b^{\pm 1}_{i_1+2 n_1}\cdots\, b^{\pm 1}_{i_s+2 n_s}\cdot  c^{2k}
$ 
for some odd
$i_1,\ldots,i_s$
and for some
$n_1,\ldots,n_s,\, k \in \Z$.
Since evidently $F_0=A_0 *_\pi c$ (for the isomorphism  $\pi:b \to b^c$), then by uniqueness of the normal form $u\in A_0$ holds if $c^k = 1$.
But since each of the indices 
$i_1\!+\!2n_1,\;\ldots\;,i_s\!+\!2n_s$ is odd, then also
$u\in F'$.
Hence, the above found transversal $T_{F'}$ to $F'$  can be continued to a transversal  $T_{B}$ to $B$ in the whole $F_0$.

In a similar manner define $D=\langle F'', c^2 \rangle=\langle b, c^2 \rangle$, and show that the elements of type $b_{n_r}^{\varepsilon_r}\!\cdots b_{n_s}^{\varepsilon_s}$ with an \textit{odd} $n_r$ form the transversal $T_{F''}$ to $F''$ in $A_0$, and this can be continued to a transversal $T_{D}$ to $D$ in the whole $F_0$.

For uniformity of our notations set $A=C=F_0$ and choose the trivial transversals $T_A=T_C=\{1\}$.
Let $\varphi$ be the homomorphism  sending $b,c$ to $b_{1},\;c^2$. Then $\varphi:A\to B$ is an isomorphism.
Similarly, letting $\psi$ to be the homomorphism  sending $b,c$ to $b,\,c^2$ we get the isomorphism $\psi:C\to D$.
Using these isomorphisms we define the HNN-extension:
$$
\Gamma = F_0 *_{\varphi,\psi} (t,s)
= \langle
b,c,t,s \;\mathrel{|}\;
b^t=b^c,\; b^s=b,\; c^t=c^s=c^2
\rangle.
$$
In the finitely presented group $\Gamma$ pick the finitely generated subgroup $L=\langle
b_1, t, s
\rangle$, and prove that $F_0 \cap L = B_0$. One side of inclusion is evident and we should show $F_0 \cap L \le B_0$.
Using four transversals $T_A, T_B, T_C, T_D$ mentioned above we can bring any $x\in L$ (which is some product of elements $b_1^{\pm 1}$, $t^{\pm 1}$, $s^{\pm 1}$) to a normal form in $\Gamma$
by scanning $x$ from right to left by four rules:

\noindent
replace in $x$ a subword of type $t^{-1} w$ (with $w$ in \eqref{EQ the worm of w OLD}) by $t^{-1} b_{n_1}^{\varepsilon_1}\!\cdots b_{n_s}^{\varepsilon_s} \,t \cdot t^{-1} \!= b_{2n_1+1}^{\varepsilon_1}\!\cdots b_{2n_s+1}^{\varepsilon_s} t^{-1}$\!\!,

\noindent
replace a subword of type $s^{-1} w$ by $s^{-1} b_{n_1}^{\varepsilon_1}\!\cdots b_{n_s}^{\varepsilon_s} \,s \cdot s^{-1} = b_{2n_1}^{\varepsilon_1}\!\cdots b_{2n_s}^{\varepsilon_s} s^{-1}$\!\!,

\noindent
replace a subword of type $tw$ by
$t\, b_{n_1}^{\varepsilon_1}\!\cdots b_{n_{r-1}}^{\varepsilon_{r-1}}
\, \,t^{-1} \cdot\, t \;
b_{n_r}^{\varepsilon_r}\!\cdots b_{n_s}^{\varepsilon_s}
= \varphi^{-1}(b_{n_1}^{\varepsilon_1}\!\cdots b_{n_{r-1}}^{\varepsilon_{r-1}})$
$ \cdot t \;
b_{n_r}^{\varepsilon_r}\!\cdots b_{n_s}^{\varepsilon_s}
$ $
=b_{(n_1 - 1)/2}^{\varepsilon_1}\!\cdots b_{(n_{r-1}-1)/2}^{\varepsilon_{r-1}}
\cdot t\,
b_{n_r}^{\varepsilon_r}\!\cdots b_{n_s}^{\varepsilon_s}
$,
which is doable, as $n_1,\ldots,n_{r-1}$ are odd,

\noindent
replace a subword of type $sw$ by
$s\, b_{n_1}^{\varepsilon_1}\!\cdots b_{n_{r-1}}^{\varepsilon_{r-1}}
\, \,s^{-1} \cdot \,s \;
b_{n_r}^{\varepsilon_r}\!\cdots b_{n_s}^{\varepsilon_s}
= \psi^{-1}(b_{n_1}^{\varepsilon_1}\!\cdots b_{n_{r-1}}^{\varepsilon_{r-1}})$
$ \cdot s \;
b_{n_r}^{\varepsilon_r}\!\cdots b_{n_s}^{\varepsilon_s}
$ $
=b_{n_1/2}^{\varepsilon_1}\!\cdots b_{n_{r-1}/2}^{\varepsilon_{r-1}}
\cdot s\,
b_{n_r}^{\varepsilon_r}\!\cdots b_{n_s}^{\varepsilon_s}
$,
which is doable, as $n_1,\ldots,n_{r-1}$ are even this time.

Applying these four actions to $x$, we never get a \textit{new} $b_i^{\varepsilon_i}$ with a \textit{negative} $i$, i.e.,  
the normal form of $x$ consists of some factors $b_i^{\varepsilon_i}$ with non-negative $i$, and of some $t^{\pm 1}$\!\!,\; $s^{\pm 1}\!\!$.
If, moreover, $x$ is in $F_0$, then by uniqueness of the normal form we rule out such $t^{\pm 1}$\!\!,\; $s^{\pm 1}\!\!$,\, and get $g \in B_0$. 
\end{Example}

\begin{Remark}
The reader may compare the above proof with Lemma 3.8 in~\cite{Higman Subgroups of fP groups}. We show that $B_0$ is benign without using Lemma~3.1--Lemma~3.4 of~\cite{Higman Subgroups of fP groups}. Instead, we deduce it from \ref{SU The conjugates collecting process} and from from some simple combinatorics on words.  
\end{Remark}

\begin{Example}
\label{EX Uts is benign}
The subgroups $B_{l}=\langle b_i \mathrel{|}  i\ge l \rangle 
$ and $B_{k,l}=\langle b_i \mathrel{|} i <k \text{ or } i\ge l \rangle 
$ are benign in $F_0$ for any fixed integers $k\le l$.
Let $\mu(x)=x^c$ be  inner automorphism on $F_0$.
Since $\mu^l(b_i)=b_i^{c^{l}}=b_{i+l}$, it is clear that $\mu^l(B_0)=B_l$, and $B_l$ is benign 
by previous example and by repeated application of 
\eqref{PO 1 LE benign subgroup has benign image} in
Lemma~\ref{LE benign subgroup has benign image}.
$F_0$ has a homomorphism $\chi$ sending $b,c$ to $b,c^{-1}$\!.\, Then the image $ \chi(B_0)=\langle b_i \mathrel{|} i \le 0  \rangle$ and the image
$\mu^{k-1}\big(\chi(B_0)\big)=\langle b_i \mathrel{|} i \le k-1  \rangle$
both are benign in $F_0$.
Since $B_{k,l}$ is a free group, we have
$B_{k,l}= \big\langle 
\mu^l(B_0),\; \mu^{k-1}\big(\chi(B_0)\big)
\big\rangle$, and this subgroup also is benign by 
\eqref{PO 2 LE benign subgroups intersection and union are benign} in 
Lemma~\ref{LE benign subgroups intersection and union are benign}.
\end{Example}


To put some more generators into the game setup the free group $F^*=\langle 
a,b,c,\; g,h,k \rangle$. 
In analogy to $b_i$ and $b_f$ introduce $h_i=h^{k^i}$ for $i\in \Z$ and $h_f = h_0^{f(0)}\cdots h_{m-1}^{f(m-1)}$ along with $g_f=g^{h_f}$ for  $f\in \E_m$. 
Using the elements $a, b_i, g, h_i$ we can choose new free subgroups of $F^*$, such as 
$F_1\!=\!\langle 
g,\,
h_0, \ldots, h_{m-1}
\rangle$. 
Below we are going to construct some benign subgroups inside such subgroups. To make discussion simpler, we treat a group of type $F_1$ as a free group on generators $g,\,
h_0, \ldots, h_{m-1}$, ``forgetting'' the fact that $h_i$ is the conjugate $h^{k^i}$ for some $k$. In such groups we still can use the notations like $b_i$, $b_f$, $a_f$, $h_i$, $h_f$, $g_f$.
The reason of this manner of notations is to make the main proof in Section~\ref{SE The proof of Theorem} clearer.

\begin{Example} 
\label{EX E_m is benign}
The subgroup 
$F_{\E_m}\!=
\langle 
g_f \mathrel{|} f\in \E_m
\rangle
$ is benign in  $F_1\!=\!\langle 
g,\,
h_0, \ldots, h_{m-1}
\rangle$.
Indeed, for each $r=0,\ldots, m-1$ there is an isomorphism $\varphi_r$ on $F_1$ sending
$g,\,
h_0, \ldots, h_{r-1},  h_{r} , \ldots, h_{m-1}$
to 
$g^{h_r}\!,\,
h_0^{h_{r}}\!, \ldots, h_{r-1}^{h_r},  h_{r} , \ldots, h_{m-1}$.
Then $\Gamma = F_1 *_{\varphi_0 ,\ldots, \varphi_{m-1}} \!(t_0 ,\ldots, t_{m-1})$ is finitely presented.
The action $g_f^{ t_r^{\pm1}}\!\!=g_{f_r^{\pm}}$ in $\Gamma$ is trivial to verify. Say, for $f=(2,5,3)$ and $r=1$ we have:
$$
\big(g^{h_0^2 \, h_1^5 \, h_2^3}\big)^{t_1} \!\!=
\big(h_2^{-3}  h_1^{-5} h_0^{-2}  \big)^{t_1} \! g^{t_1}\,
\big( h_0^2 \, h_1^5 \, h_2^3\big)^{t_1}
\!\!=
h_2^{-3} 
h_1^{-5}
\big(h_0^{-2}\big)^{h_1}
g^{h_1} \,
\big(h_0^{2}\big)^{h_1}  h_1^{5}
h_2^3
=
g^{h_0^2 \, h_1^{5+1} \, h_2^3}
\!\!=
g^{h_{f_1^+}}\
$$
with $f_1^+\!\!=\!(2,6,3)$.
Thus, $L= \langle 
g,\,
t_0, \ldots, t_{m-1}
\rangle$ contains  the elements $g_f=g^{t_{0}^{f(0)}\cdots \,t_{m-1}^{f(m-1)}}$ for \textit{all} $f\in \E_m$.
On the other hand, 
applying \ref{SU The conjugates collecting process}
to any word $w\in L$ for
$\X=\{g\}$
and $\Y=\{t_0, \ldots, t_{m-1}\}$
we rewrite it as $w=u\cdot v$, where $u$ is a product of some conjugates of $g$ by some words in $t_i$ (and they are equal to some $g_f$ as we just saw), while $v$ is another word in letters $t_i$. 
If $w$ also is in $F_1$, then $v=1$ by uniqueness of the normal form, and so $F_1 \cap L \subseteq F_{\E_m}$.
\end{Example}

\begin{Example}
If $v_f=g_{f^+} \, b^{-1}_{m-1}  g_f^{-1}$, then the subgroup 
$V_{\E_m}\!\!=
\langle 
v_f \mathrel{|} f\in \E_m
\rangle
$ is benign in $F_2\!=\!\langle 
b_{m-1},\, 
g,\,
h_0, \ldots, h_{m-1}
\rangle$.
Let $\varphi_r$ be the isomorphism on $F_2$ 
fixing $b_{m-1}$, and with same effect on other generators as in previous example. 
$\Delta\!= \! F_2 *_{\varphi_0 ,\ldots, \varphi_{m-1}} \!(t_0 ,\ldots, t_{m-1})$ is finitely presented. The equality 
$v_f^{t_r^{\pm1}}\!\!=v_{f_r^\pm}^{\vphantom{1}}$ is trivial to verify in $\Delta$. \;Say, for $f=(2,5,3)$ and $r=1$:
$$
v_f^{t_1}=\!
\big(g^{h_0^2 \, h_1^5 \, h_2^{3+1}}\big)^{t_1} 
(b_{m-1}^{-1})^{t_1}
\big(g^{-\,h_0^2 \, h_1^5 \, h_2^{3}}\big)^{t_1}
=
g^{h_0^2 \, h_1^{5+1} \, h_2^{3+1}}
b_{m-1}^{-1}
g^{-b_0^2 \, h_1^{5+1} \, h_2^{3}} 
=
v_{f_1^+}\,.
$$
\vskip-1mm
\noindent
If $v_{(0)}\!=
g^{h_{m-1}}\, b_{m-1}^{-1}  g^{-1}
$\!\!,\;  then the above shows that 
$L= \langle 
v_{(0)},\, t_0, \ldots, t_{m-1}
\rangle$ contains  
$v_f\!=\!
v_{(0)}^{t_{0}^{f(0)}\cdots \,t_{m-1}^{\,f(m-1)}}$\! for \textit{all} $f\in \E_m$.
On the other hand, 
applying \ref{SU The conjugates collecting process}
to $w\in L$ for
$\X\!=\!\{v_{(0)}\}$
and $\Y\!=\!\{t_0, \ldots, t_{m-1}\}$ we like in previous example get 
$F_2 \cap L \subseteq V_{\E_m}$. I.e., $V_{\E_m}$ is benign. 
\end{Example}

\begin{Example}
\label{EX benign a b_i c_i}
If $z_f=g_f^{\vphantom8} b_f^{-1}$\!\!,\; then the subgroup $
Z_{\E_m}=
\langle \,
z_{f} \!\mathrel{|} f\in \E_m
\rangle
$ is benign in $F_3=\langle 
b_0, \ldots, b_{m-1},\, 
g,\,
h_0, \ldots, h_{m-1}
\rangle$.
First verify that 
$\langle Z_{\E_{m-1}}, V_{\E_m} \rangle = Z_{\E_m}$.
We have $V_{\E_m}\! \le Z_{\E_m}$ because
$
v_f=g_{f^+} \, b_{m-1}^{-1} \, g_f^{- 1}
=
g_{f^+}^{\vphantom8} \, b_{f^+}^{-1} \cdot b_f^{\vphantom8} g_f^{- 1}=
z_{f^+}^{\vphantom8}\!\! \cdot z_{f}^{-1} \in Z_{\E_m}$.
And since also $Z_{\E_{m-1}}\! \le Z_{\E_{m}}$, we have $\langle Z_{\E_{m-1}}, V_{\E_{m}} \rangle \le Z_{\E_{m}}$. Next, show by induction on $f(m-1)$ that $z_f\in \langle Z_{\E_{m-1}}, V_{\E_{m}} \rangle $. 
When $f(m-1)=0$, then $z_f\in Z_{\E_{m-1}}$.
And, since then $f^+(m-1)=1$, we from 
$v_f^{\vphantom{+}}=
z_{f^+}^{\vphantom{+}}\! \cdot z_f^{-1} \in V_{\E_{m}}$  get $z_{f^+}\in \langle Z_{\E_{m-1}}, V_{\E_{m}} \rangle$.
Say, for $f=(2,5,0)$ we have
$
Z_{\E_{3-1}}\!
\ni
v_{(2,5,0)}= z_{(2,5,0+1)}^{\vphantom{+}} \cdot z_{(2,5,0)}^{-1}$, and so
$
z_{(2,5,0+1)} \in \langle Z_{\E_{3-1}}, V_{\E_{3}} \rangle
$.
The cases with $f(m\!-\!1)=1,2,\ldots$ are covered similarly. Symmetric arguments cover the case with negative $f(m\!-\!1)$. Since  $Z_{\E_{m}}=\langle Z_{\E_{m\!-\!1}}, V_{\E_{m}} \rangle$, the fact that $Z_{\E_{m}}$ is benign follows by induction: $Z_{\E_{0}} =\langle g_{(0)}^{\vphantom8} b_{(0)}^{-1} \rangle=\langle g \rangle$ is benign, as it is finitely generated.
If $Z_{\E_{m\!-\!1}}$ is benign, then $Z_{\E_{m}}=\langle Z_{\E_{m\!-\!1}}, V_{\E_{m}} \rangle$ is benign by 
\eqref{PO 2 LE benign subgroups intersection and union are benign} in 
Lemma~\ref{LE benign subgroups intersection and union are benign} and by previous example.
\end{Example}


\section{The modified proof of Theorem~\ref{TH Higman 4}}
\label{SE The proof of Theorem}

\noindent
Denote by 
$\BB$ 
the set of all subsets $\B \subseteq \E$ for which $A_{\B}= \langle a_f \;|\; f \in \B\, \rangle$ is benign in $F=\langle a,b,c \rangle$.
First show that $\Zz,\S\in \BB$, and then verify that if a subset $\B$ is obtained from $\Zz$ and/or $\S$ by Higman operations \eqref{EQ Higman operations}, then $\B$ also is in $\BB$. 
This proves Theorem~\ref{TH Higman 4} as by Theorem~3 in \cite{Higman Subgroups of fP groups} every recursively enumerable subset $\B\subseteq \mathcal E$ can be obtained that way.

\smallskip
The free group
$B\!=\!\langle a,b_i \mathrel{|} i\!\in \!\Z \rangle$  is a product $Z_0*Z_1$ for 
$
Z_0\!=\!\langle b_i \mathrel{|} i> 0 \rangle 
$ and $
Z_1=\langle a,b_i \mathrel{|} 
i \le  0 \rangle.
$
Of these $Z_0=B_1$ is benign by Example~\ref{EX Uts is benign}, 
and $Z_1$ is benign by 
Lemma~\ref{LE benign subgroup has benign image},
Lemma~\ref{LE benign subgroups intersection and union are benign}
and  Example~\ref{EX Uts is benign}, since $
Z_1=\big \langle \langle a\rangle,\, \langle b_i \mathrel{|} 
i \le  0 \rangle \big \rangle
$ with the benign subgroup  $\langle b_i \mathrel{|} 
i \le  0 \rangle=\chi\big(\langle b_i \mathrel{|} 
i \ge  0 \rangle \big)$ for the 
homomorphism $\chi$ sending $b,c$ to $b,c^{-1}$ defined in Example~\ref{EX Uts is benign}. 
So there are finitely presented  $K_0$ and $K_1$ which contain $F$, and which respectively have finitely generated subgroups $L_0$ and $L_1$ such that 
$F \cap L_0 = Z_0$ and 
$F \cap L_1 = Z_1$.
Then both HNN-extensions
$\Lambda_0 = K_0 *_{L_0} \!t$ and  $\Lambda_1 = K_1 *_{L_1} \!s$ also are finitely presented.

Amalgamating $F$ in $\Lambda_0$ and $\Lambda_1$ we get the  finitely presented group $\Theta\!=\!\Lambda_0 *_{F} \Lambda_1$ in which we can single out a smaller subgroup.
Namely, by \eqref{PO1 LE subgroups in HNN-extension} in
Lemma~\ref{LE subgroups in HNN-extension}
we in $\Lambda_0$ have $\langle F, t \rangle = F *_{F\, \cap\, L_0}\! t = F *_{Z_0} \! t$, and this HNN-extension contains the subgroup $F *_{Z_0} \! F^t$.
In the same way in $\Lambda_1$ we get
$\langle F, s \rangle  = F *_{Z_1} \!s$
containing 
$F *_{Z_1} \!F^s$\!.
And since $F *_{Z_0} \!F^t$ and  $F *_{Z_1} \!F^s$ both contain $F$, we by \eqref{PO 1 in CO NEW G*H lemma corollary} in Corollary~\ref{CO NEW G*H lemma corollary} have 
$\langle F,\, F^t, F^s\rangle=
 (F *_{Z_0} \!F^t) *_F
 (F *_{Z_1} \!F^s)
$. 
The advantage of the constructed smaller subgroup  is that its defining relations are trivial to list: 
the relations identifying $Z_0$ with $Z_0^t$, and the relations identifying $Z_1$ with $Z_1^s$. These identifications can be understood as two isomorphisms, and since 
$Z_0$ and $Z_1$ generate their \textit{free} product, both isomorphisms can be continued to a single isomorphism $\nu:Z_0 * Z_1 \to \langle Z_0^t, Z_1^s\rangle$ and, hence, $\langle F,\, F^t, F^s\rangle$ is noting but the amalgamated free product of $F *_{\nu } \langle F^t, F^s\rangle$. 
Since all the above listed relations of $\langle F,\, F^t, F^s\rangle$ are covered by $\nu$, there are no relations left for $\langle F^t, F^s\rangle$, i.e.,  $\langle F^t, F^s\rangle = F^t * F^s$.

Therefore, the identical automorphism on $F^t$ and the automorphism given by conjugation by $b^s$ on $F^s$ possess a common extension $\omega$ on $\langle F^t, F^s \rangle$. The respective HNN-extension
$
\Psi = \Theta *_{\omega}\! d 
$
is finitely presented as $\omega$ can be defined by its values on just six generators $a^t\!\!, b^t\!\!, c^t\!\!,\; a^{\,s}\!\!, b^s\!\!, c^s$ of $F^t * F^s$.
For the generators of $B$ we in $\Psi$ have:
\begin{equation}
\label{EQ action of d on F}
\text{
$b_i^d= b_i$ for $i> 0$,\quad
$a^d = a^b$\!\!,\;\quad
$b_i^d= b_i^b$ for $i \le 0$
}
\end{equation}
(say, $a^d = (a^s)^\omega
= (a^s)^{b^s}=s^{-1} b^{-1} s \;\cdot\; s^{-1} a\, s \;\cdot\; s^{-1} b\, s
= (a^b)^{\,s} = a^b$, since $a^b \in Z_1^{\vphantom{1}} \!= Z_1^s$; \; or for $i \le 0$ we have $
b_i^d 
= (b_i^s)^\omega
= (b_i^s)^{b^s}
= s^{-1} b^{-1} s \;\cdot\; s^{-1} b_i\, s \;\cdot\; s^{-1} b\, s
= (b_i^b)^{\,s}
= b_i^b
$, since $b_i^b \in Z_1^{\vphantom{1}}$).

Next, take $\delta$ to be the automorphism on $F$ which sends $a,b,c$ to $a,b^c\!\!,\,c$. In the finitely presented group $\Delta  = \Psi  *_\delta e$ denote $d_i = d^{e^i}$ for $i\in \Z$.
From \eqref{EQ action of d on F}, from the action of $e$ on $a,b,c$ and from $b_i^{e^j}
= 
(c^{-i})^{e^j} b^{e^j} (c^{i})^{e^j}
= 
c^{-i} b^{c^j} c^{i}
=b_{i+j}$ it follows:
\begin{equation}
\label{EQ action of d_i on F}
\text{
$b_i^{d_j}= b_i$ \,for\, $i> j$,\quad
$a^{d_j} = a^{b_j}$\!\!\!\!,\;\;\quad
$b_i^{d_j}= b_i^{b_j}$ \,for\, $i\le j$
}
\end{equation}
(say, $a^{d_j}=a^{e^{-j} d\, e^j}=a^{d\, e^j}=(b^{-1}a\, b)^{e^j}=b^{-c^j} a b^{c^j}=a^{b_j}$; \; or for $i\le j$ we have $b_i^{d_j}=(c^{-i} b\,  c^i)^{\,e^{-j}d e^j}
=(c^{-i} b^{c^{-j}}  c^i)^{\,d\, e^j}
=b_{i-j}^{\,d e^j} 
=b_{i-j}^{\,b e^j}
=(b^{-1} b_{i-j} \, b)^{\, e^j} 
=b^{-c^j} \cdot 
c^{-(i-j)} b^{c_j} c^{(i-j)}
\cdot b^{c^j}
=b_i^{b_j}
$).

Here is the main feature for the sake of which $\Delta$ was built:

\begin{Lemma} 
\label{LE action of d_m on f}
For any $f \in \mathcal E$ and for any fixed integer $j$ 
we have 
$
a_f^{d_j} =\! a_{f_{j}^+}$ 
and
$
a_f^{\,d_j^{-1}}\!\! = a_{f_{j}^-}
$.
\end{Lemma}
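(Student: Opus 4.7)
The plan is to expand $a_f^{d_j}$ via the definition $a_f = a^{b_f}$ and the action formulas \eqref{EQ action of d_i on F}, relying on the identity $a_f^{d_j} = \bigl(a^{d_j}\bigr)^{b_f^{d_j}}$. The first step is to split the word $b_f$ at index $j$ as $b_f = P \cdot b_j^{f(j)} \cdot Q$, where $P = \prod_{i<j} b_i^{f(i)}$ collects the factors with index below $j$ and $Q = \prod_{i>j} b_i^{f(i)}$ collects those above; both are finite products since $f$ has finite support.

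Next, I apply \eqref{EQ action of d_i on F} factor by factor. Since $d_j$ fixes every $b_i$ with $i \ge j$ and conjugates every $b_i$ with $i < j$ by $b_j$, we get
\[
P^{d_j} = b_j^{-1} P\, b_j, \qquad \bigl(b_j^{f(j)}\bigr)^{d_j} = b_j^{f(j)}, \qquad Q^{d_j} = Q,
\]
and therefore $b_f^{d_j} = b_j^{-1}\, P\, b_j^{f(j)+1}\, Q$. Combining this with $a^{d_j} = a^{b_j} = b_j^{-1} a\, b_j$ causes the outer $b_j^{\pm 1}$ to telescope, and one computes
\[
a_f^{d_j} = \bigl(a^{b_j}\bigr)^{b_j^{-1} P\, b_j^{f(j)+1} Q} = a^{P\, b_j^{f(j)+1}\, Q} = a^{b_{f_j^+}} = a_{f_j^+},
\]
where the last identification uses that $P \cdot b_j^{f(j)+1} \cdot Q$ is precisely the word $b_{f_j^+}$ by the definition of $f_j^+$.

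For the second equality I would avoid redoing any calculation: substituting $f \mapsto f_j^-$ in the formula just established gives $a_{f_j^-}^{d_j} = a_{(f_j^-)_j^+} = a_f$, and then conjugating by $d_j^{-1}$ yields $a_f^{d_j^{-1}} = a_{f_j^-}$. The only delicate point is the splitting at position $j$: one has to verify that the single conjugating $b_j$ produced by the action of $d_j$ on $P$ combines with $b_j^{f(j)}$ to give exactly the power $b_j^{f(j)+1}$ required by $b_{f_j^+}$, while the $b_j$ introduced by $a^{d_j}$ cancels cleanly against the $b_j^{-1}$ at the front of $b_f^{d_j}$. Once this bookkeeping is in place, the lemma is a direct cancellation.
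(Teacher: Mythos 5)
Your proof is correct and follows essentially the same route as the paper: both expand $a_f^{d_j}$, observe via \eqref{EQ action of d_i on F} that exactly the factors of $b_f$ with index below $j$ acquire a common conjugating $b_j$, and let the resulting $b_j^{\pm1}$ telescope to produce $b_{f_j^+}$. Your explicit $P\cdot b_j^{f(j)}\cdot Q$ splitting and the derivation of the second equality by substituting $f\mapsto f_j^-$ and inverting are just a tidier write-up of the cancellation the paper describes informally.
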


\begin{proof} 
Clearly, $a_f^{d_j} 
\!=\! b_f^{\!-\,d_j}\,
 a^{d_j} 
b_f^{\,d_j}
$\!. By \eqref{EQ action of d_i on F} the effect of conjugation of factors $b_{i}^{f(i)}$ \!in $b_f$ by $d_j$ is so that in $b_f$ all the factors standing \textit{before} $b_{j+1}^{f(j+1)}$ are conjugated by the \textit{same} $b_j$, while other factors are not changed. After cancellations of all $b_j^{\vphantom8} b_j^{-1}$ in and between the factors of $a_f^{\!-b_j}
$\!,\, $a^{b_j}$ and $
a_f^{b_j}$ we get the needed equality. 
The second equality is considered similarly.
\end{proof}

\begin{Example}
\label{EX d acts on a_f} 
Let $j=1$ and $f=(2,5,3)$.
Then $b_f=b_0^{2}\,b_1^5\,b_2^{3}$, and by \eqref{EQ action of d_i on F} we have:
$
a_f^{d_1}=
\big(
b_2^{\!-3}b_1^{\!-5}b_0^{\!-2}
\; a\;
b_0^{2}b_1^{5}b_2^{3} \,
\big)^{d_1}
\!\!= 
b_2^{-3}\,
\big(b_1^{\!-1} b_1^{\!-5} b_1^{\vphantom8}\big)
\big(b_1^{\!-1} b_0^{\!-2} b_1^{\vphantom8}\big)
\big(b_1^{\!-1} a b_1^{\vphantom8}\big)  
\big(b_1^{\!-1} b_0^{2}b_1^{\vphantom8}\big)\,
\big(b_1^{\!-1} b_1^{5}b_1^{\vphantom8}\big)
\,b_2^{3} 
$,
which after cancellations is equal to
$
b_2^{\!-3}b_1^{\!-\,6}  b_0^{\!-2}
\;a\;
b_0^{2}b_1^6b_2^{3}
= a_{f_{1}^+}
$
with
$f_{1}^+ = (2,6,3)$.
\end{Example}

\begin{Lemma}
\label{LE X_Z is benign}
$\BB$ contains the sets $\Zz$ and 
$\S$.
\end{Lemma}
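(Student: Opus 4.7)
My plan is as follows. The inclusion $\Zz \in \BB$ is immediate: since $b_{(0)} = 1$, we have $A_\Zz = \langle a \rangle$, a cyclic subgroup of the finitely presented free group $G$, so taking $K = G$ and $L = \langle a \rangle$ in Definition~\ref{DE benign subgroup} already gives $G \cap L = A_\Zz$.

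For $\S$, I would work inside the finitely presented group $\Delta$ constructed above. The seed generator $a_{(0,1)} = a^{b_1}$ lies in $G \le \Delta$, and by Lemma~\ref{LE action of d_m on f} applied twice,
\[
a_{(n, n+1)}^{d_0 d_1} = a_{(n+1, n+1)}^{d_1} = a_{(n+1, n+2)}
\]
for every $n \in \Z$. A two-sided induction then yields $a_{(n, n+1)} = (a^{b_1})^{(d_0 d_1)^n}$ for all $n \in \Z$, so the finitely generated subgroup $L = \langle a^{b_1},\, d_0 d_1 \rangle \le \Delta$ contains every generator of $A_\S$; in particular $A_\S \le G \cap L$.

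For the reverse inclusion I would apply the conjugates-collecting procedure of~\ref{SU The conjugates collecting process} with $\X = \{a^{b_1}\}$ and $\Y = \{d_0 d_1\}$: each $w \in L$ can be written as $w = u \cdot v$ where $u$ is a product of factors $(a^{b_1})^{\pm (d_0 d_1)^{n_i}} = a_{(n_i,\, n_i+1)}^{\pm 1}$ lying in $A_\S$, and $v = (d_0 d_1)^k$ for some $k \in \Z$. If moreover $w \in G$, then $v = u^{-1} w \in G$ as well, and the whole argument reduces to showing that $(d_0 d_1)^k \in G$ forces $k = 0$.

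This last step is the main obstacle, and it amounts to an HNN-normal-form computation in $\Delta = \Psi *_\delta e$. Writing $d_0 = d$ and $d_1 = d^e = e^{-1} d e$, the element in question is $(d e^{-1} d e)^k$; all of its $\Psi$-syllables equal $d$, and $d$ lies outside $G$ because $d$ is itself the stable letter of $\Psi = \Theta *_\omega d$ and $G \le \Theta$. Since $G$ is the common source and target of $\delta$, this rules out every possible $e$-pinch, so for $k \ne 0$ the word is already in reduced HNN-normal form with $2|k|$ occurrences of $e^{\pm 1}$; consequently it lies outside $\Psi$, and in particular outside $G$. Thus $k = 0$ and $w = u \in A_\S$, finishing the proof that $\S \in \BB$.
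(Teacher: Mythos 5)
Your proposal is correct and follows essentially the same route as the paper: the same reduction $A_\Zz=\langle a\rangle$, the same generating pair $\langle a_{(0,1)},\,d_0d_1\rangle$ with the identity $a_{(n,n+1)}^{d_0d_1}=a_{(n+1,n+2)}$ from Lemma~\ref{LE action of d_m on f}, and the same conjugates-collecting reduction to showing $(d\,e^{-1}de)^k\in G$ forces $k=0$. The paper settles that last point by noting $d\notin G$ may be placed in a transversal $T_G$ so the word is already in normal form in $\Delta$, which is just the transversal-language version of your Britton's-lemma pinch argument.
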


\begin{proof}
Since  $\Zz$  consists of $f\!=\!(0)$ only, then $A_{\Zz}=\langle a \rangle$. Being finitely generated $\langle a \rangle$ is benign.

$\S$ contains the function $f=(0,1)$.
Apply Lemma~\ref{LE action of d_m on f} to $a_{f}$  repeatedly we get:
\begin{equation}
\label{EQ d0d1 acts on a_01}
a_{f}^{(d_0 d_1)^n}
\!\!=\big(a_{(0,1)}^{d_0 }\big)^{d_1  \, (d_0 d_1)^{n-1}}
\!\!\!\!=a_{(0+1,\,1)}^{d_1  \, (d_0 d_1)^{n-1}}
\!\!\!=a_{(1,\,1+1)}^{(d_0 d_1)^{n-1}}
\!\!\!=a_{(2,3)}^{(d_0 d_1)^{n-2}}
\!\!\!= \cdots 
=a_{(n,n+1)}.
\end{equation}
Thus, $a_{(n,n+1)} \in \langle
a_{f}, d_0 d_1
 \rangle $ for any $n\in \Z$, and so $A_{\S}\le \langle
 a_{f}, d_0 d_1
 \rangle$.
Using \eqref{EQ elements from <x,y>} 
for $x=a_{f}$
and
$y= d_0 d_1$,
we can rewrite any element $w\in \langle
a_{f}, d_0 d_1
\rangle$
as $w=u\cdot v$, where $u$ is a product of some conjugates $a_{f}^{\pm (d_0 d_1)^{n_i}}$\!\!\!\!,\, and $v=(d_0 d_1)^k$\!.
By \eqref{EQ d0d1 acts on a_01} all those conjugates are in $F$.
Thus, if also $w \in F$, then  $v\in F$.
Let us show this may happen for  $v=1$ only.
Since $d\notin F$, we can include $d$ in any transversal $T_F$ to $F$ in $\Psi$.
Then $v = (d^{e^0} d^{e^1})^k  
= (d\, e^{-1} \! d\, e)^k$ is in normal form in $\Delta$  (with respect to $T_F$), i.e., $k=0$, if $v\in F$.
So $F \cap \langle
a_{f}, d_0 d_1
\rangle = A_{\S}$, and  $A_{\S}$ is benign.
\end{proof}

\begin{Theorem}
\label{LE closed under higman operations}
The set $\BB$ is closed under the Higman operations \eqref{EQ Higman operations}.
\end{Theorem}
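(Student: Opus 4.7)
The plan is to verify closure under each of the ten operations in~\eqref{EQ Higman operations} separately, grouping them by the type of construction involved. The two binary operations $\iota, \upsilon$ are immediate: the elements $a_f = a^{b_f}$, $f \in \E$, are distinct conjugates of $a$ by elements of the free subgroup $\langle b_i : i \in \Z\rangle$, so viewing $G = \langle a\rangle * \langle b,c\rangle$ as a free product, the Kurosh/Nielsen--Schreier description of the normal closure of $\langle a\rangle$ shows that $\{a_f : f \in \E\}$ freely generates its subgroup. Consequently $A_{\mathcal A \cap \mathcal B} = A_{\mathcal A} \cap A_{\mathcal B}$ and $A_{\mathcal A \cup \mathcal B} = \langle A_{\mathcal A}, A_{\mathcal B}\rangle$, and Lemma~\ref{LE benign subgroups intersection and union are benign} finishes both cases.

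For the ``linear'' unary operations $\rho, \sigma, \tau, \theta$, I would define endomorphisms of $G$ fixing $a$ and acting on $b, c$ so as to realise the operation on the level of the $b_i$: $\rho$ via $b \mapsto b^{-1}$ (inverting each $b_i$); $\sigma$ via $b \mapsto b^c$ (shifting indices, as in Example~\ref{EX b SOME powers is benigh}); $\theta$ via $c \mapsto c^2$ (stretching to even indices); and $\tau$ via the composition $b \mapsto b^c,\; c \mapsto c^{-1}$, which realises the reflection $b_i \mapsto b_{1-i}$ and, together with a compensating shift produced by $\sigma$, isolates the $0 \leftrightarrow 1$ swap. In each case the endomorphism maps $a_f$ to $a_{\omega(f)}$ (for $\theta$ after first intersecting with the benign set of ``even-supported'' functions built below), and Lemma~\ref{LE benign subgroup has benign image}\,(1) transfers benignness from $A_{\mathcal A}$ to $A_{\omega(\mathcal A)}$.

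For the ``selection'' operations $\zeta$ and $\pi$ I would use the benign subgroups $\langle b_0 \rangle$ and $B_0 = \langle b_i : i \ge 0\rangle$ from Example~\ref{EX B_0 is benign}. Concretely, inside a joint finitely presented overgroup of $G$ and $A_{\mathcal A}$, the join $\langle A_{\mathcal A}, \langle b_0 \rangle \rangle$ produces the subgroup generated by $a_f$ for all $f$ obtained from some $g \in \mathcal A$ by arbitrary modification of $g(0)$, which after intersecting with $G$ is exactly $A_{\zeta(\mathcal A)}$; Corollary~\ref{CO G*H lemma corollary} together with Lemmas~\ref{LE benign subgroups intersection and union are benign} and~\ref{LE subgroups in amalgamated product} make this precise. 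The operation $\pi$ is handled identically, substituting $B_0$ for $\langle b_0 \rangle$.

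The central obstacle is closure under $\omega_m$, where $f \in \omega_m(\mathcal A)$ iff every length-$m$ block $(f(mi), \ldots, f(mi+m-1))$ lies in $\mathcal A$. My plan leans on Lemma~\ref{LE action of d_m on f} (conjugation by $d_j^{\pm 1}$ increments $f(j)$) and on the benignness of $Z_{\E_m}$ from Example~\ref{EX benign a b_i c_i}, which encodes a single block at position $0$. To enforce the block condition at every position $mi$ simultaneously, I would form an amalgamation or iterated HNN-extension that couples the condition at position $mi$ with that at position $m(i+1)$ via the shift $c^m$, and then intersect with $G$. Showing that the result stays inside a finitely presented group, and that the intersection is exactly $A_{\omega_m(\mathcal A)}$, is the technical heart of the argument; this is precisely where Higman's original proof required the lengthy Lemmas~3.9, 3.10, and 4.10, and the direct combinatorial construction replacing them is the central contribution advertised in the introduction.
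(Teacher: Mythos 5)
Your handling of $\iota,\upsilon$ and $\sigma$ matches the paper, and your map for $\rho$ matches the definition as printed (note, though, that the paper's formula $f(i)=-g(i)$ is a misprint for $f(i)=g(-i)$, as the example $\rho(\S)$ and the paper's own proof show; the correct endomorphism is $c\mapsto c^{-1}$, giving $b_i\mapsto b_{-i}$). The remaining cases have genuine gaps. For $\zeta$, the join $\langle A_{\B},b_0\rangle$ cannot work: it already lies inside $G$, so intersecting with $G$ changes nothing, and it contains $b_0\notin A_{\zeta(\B)}$. More fundamentally, conjugating $a_f$ by $b_0^k$ does \emph{not} produce $a_{f'}$ with $f'(0)=f(0)+k$, because $b_{f'}=b_0^k\,b_f$ requires multiplying $b_f$ on the \emph{left}, i.e., conjugating $a_f$ by the $f$-dependent element $b_f^{-1}b_0^k b_f^{\vphantom1}$. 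The entire purpose of the paper's preliminary construction of $\Theta$, $\Psi$ and $\Delta$ is to manufacture, inside a finitely presented overgroup, a stable letter $d$ whose action \eqref{EQ action of d on F} performs this left insertion uniformly in $f$ (Lemma~\ref{LE action of d_m on f}); $\zeta$ and $\pi$ are then settled with $\langle A_{\B},d\rangle$ and $\langle A_{\B},d_1,d_2,\ldots\rangle$. Your proposal omits this construction, and without it $\zeta$ and $\pi$ fail. The case $\tau$ fails for a related reason: endomorphisms of $G$ of the form $b\mapsto b^{c^l}$, $c\mapsto c^{\pm k}$ induce only affine maps $i\mapsto \pm i+l$ on indices, and the (infinite dihedral) group these generate does not contain the transposition $0\leftrightarrow 1$ fixing all other indices, so no ``compensating shift'' can repair the reflection $b_i\mapsto b_{1-i}$. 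The paper instead realizes the swap of $b_0,b_1$ --- an automorphism of the infinitely generated free group $B$, not of $G$ --- as conjugation by a stable letter $q$ in the finitely presented group $\Phi=\Lambda *_{\xi} q$ built over the amalgam $B *_{B_{0,2}}G^t$, and then pulls $A_{\B}^{\,q}$ back into $G$.

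For $\theta$ your direction is reversed: $c\mapsto c^2$ sends $b_i$ to $b_{2i}$ and hence \emph{stretches} supports, carrying $a_g$ to $a_{g'}$ with $g'(2i)=g(i)$; since $\theta$ compresses, you must take the \emph{preimage} under this endomorphism of the benign subgroup $A_{\Y}$ (where $\Y$ consists of the functions agreeing with some $f\in\B$ on even indices and vanishing on odd ones) and invoke Lemma~\ref{LE benign subgroup has benign image}\,(2), not part (1). Finally, for $\omega_m$ you state a plan but prove nothing: the content of this case is the explicit chain of extensions $\Sigma$, $\Pi$, $\Omega$, the subgroup $W_{\B}=\langle g_f,a,r\mid f\in\B\rangle$, and the nested normal-form analysis showing $G\cap W_{\B}=A_{\omega_m\B}$. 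Since $\omega_m$, together with the $d$-construction needed for $\zeta$ and $\pi$ and the stable-letter device needed for $\tau$, is where essentially all the work of the theorem lies, the proposal as it stands establishes only the easy cases.
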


This fact at once follows from Lemma~\ref{LE benign subgroups intersection and union are benign} for binary operations $\iota$ and $\upsilon$.
The proofs for unary operations
$\rho,\, \sigma,\, \tau,\, \theta,\, \zeta,\, \pi,\, \omega_m$ follow from specific cases and examples below. In each of them we suppose $\B $ is in $\BB$, and show that a Higman operation keeps it in $\BB$.

\smallskip

\textit{$\BB$ is closed under  $\rho$} because for the automorphism $\varphi$  of $F$  
sending $a,b,c$ to $a,b,c^{-1}$ 
we have 
$\varphi(b_i)=\varphi(b^{c^i})=b^{c^{-i}}=b_{-i}$. So 
$A_{\rho(\B)} =  \varphi(A_{\B})$ is benign by 
\eqref{PO 1 LE benign subgroup has benign image} in 
Lemma~\ref{LE benign subgroup has benign image}.

\smallskip

\textit{$\BB$ is closed under  $\sigma$.} 
Indeed, for the automorphism $\varphi$  of $F$  
sending $a,b,c$ to $a,b^{c}\!\!,\,c$ 
we have 
$\varphi(b_i)=\varphi(b^{c^i})=b^{c^{i+1}}\!=b_{i+1}$, and so 
$A_{\sigma(\B)} =  \varphi(A_{\B})$ is benign by 
\eqref{PO 1 LE benign subgroup has benign image}
in Lemma~\ref{LE benign subgroup has benign image}.

\textit{$\BB$ is closed under  $\zeta$.}
For each $f \in \B$ we 
in $\Delta$ 
by Lemma~\ref{LE action of d_m on f} have
$a_f^{d^k}\!\!=a_{f'}^{\vphantom8}$, with
$f'(0)\!=\!f(0)+k$, and $f'(i)\!=\!f(i)$ for $i\!\neq\! 0$. 
Any $f'\!\!\in\! \zeta(\B)$ can be obtained this way, and so $R\!=\!\langle\, A_{\B}, d\rangle$ contains $A_{\zeta(\B)}$.
On the other hand, for any $w \!\in\! R$ we apply  \ref{SU The conjugates collecting process}  for
$\X=\{a_f \mathrel{|} f \in \B\}$
and $\Y=\{d\}$ to get $w = u \cdot v$, where $u$ is a product of factors of type $a_f^{\pm d^{k_i}}\!\!\in A_{\zeta(\B)}$, and where $v=d^k$\!. 
If also $w \in F$, then $v\!=\!1$ by uniqueness of  normal form, i.e., $w\in A_{\zeta(\B)}$ and $F \!\cap R= A_{\zeta(\B)}$. 
Since $R$ is benign by 
\eqref{PO 2 LE benign subgroups intersection and union are benign} in
Lemma~\ref{LE benign subgroups intersection and union are benign}, $ A_{\zeta(\B)}$ is benign by 
\eqref{PO 1 LE benign subgroups intersection and union are benign} in
Lemma~\ref{LE benign subgroups intersection and union are benign}.

\vskip1mm

\textit{$\BB$ is closed under  $\pi$.}
In previous case take $R=\langle\, A_{\B}, d_1, d_2,\ldots\, \rangle$ instead. 
$R$ contains $A_{\pi(\B)}$, and  applying \ref{SU The conjugates collecting process} for
$\X=\{a_f \mathrel{|} f \in \B \}$
and $\Y=\{d_1, d_2,\ldots\}$ we get 
$F \cap \,R= A_{\pi(\B)}$.
Since $Z_0$
is benign in $\langle b,c \rangle$ by Example~\ref{EX Uts is benign}, its image  $\langle d_0, d_1,\ldots \rangle$  
together with the subgroup 
$\langle d_1, d_2,\ldots \rangle$ 
are benign in $\langle d,e \rangle$ and, thus, in $\Delta$.  Hence $R$ is benign by
\eqref{PO 2 LE benign subgroups intersection and union are benign} in
Lemma~\ref{LE benign subgroups intersection and union are benign}. Then 
$A_{\pi(\B)}$ is benign by \eqref{PO 1 LE benign subgroups intersection and union are benign} in
Lemma~\ref{LE benign subgroups intersection and union are benign}.

\vskip1mm

\textit{$\BB$ is closed under  $\theta$.}
Modify the idea of previous points. The subgroup
$\langle d_{i} \mathrel{|} 
\text{all odd $i$}\, \rangle$
is benign as it is the isomorphic image of the benign subgroup $A_{2,1}$ from Example~\ref{EX b SOME powers is benigh}. 
Since $A_{\B}$ is benign,  $R_1=\langle \, A_{\B},  d_{i} \mathrel{|} 
\text{all odd $i$}\, \rangle$ also is benign.
Applying \ref{SU The conjugates collecting process}  for 
$\X=\{a_f \mathrel{|} f \in {\B} \}$
and $\Y=\{ d_{i} \mathrel{|} 
\text{all odd $i$}\, \}$ we get that
$F \cap R_1= A_{{\B}_1}$, where $f' \in {\B}_1$
if and only if there is an $f \in {\B}$ such that $f'(i)=f(i)$ for all even $i$ (there can be any values $f'(i)$ for odd $i$).
Similarity, $\langle d_{i} \mathrel{|} 
\text{all even $i$}\, \rangle$
 is benign (use $A_{2,0}$ this time).
Thus, $R_2=\langle \, a,  d_{i} \mathrel{|} 
\text{all even $i$}\, \rangle$ also is benign.
Applying \ref{SU The conjugates collecting process} for 
$\X=\{a\}$
and $\Y=\{ d_{i} \mathrel{|} 
\text{all even $i$}\, \}$ we get that
$F \cap R_2= A_{\B_2}$ where $f' \in \B_2$
if and only if $f'(i)=0$ for all odd $i$ (there can be any values $f'(i)$ for even $i$).
Then  $A_{\B_1}\! \cap A_{\B_2}$ is benign.
Since the $a_f$ are \textit{free} generators, 
$A_{\B_1}\! \cap A_{\B_2}\!\!=\!A_\Y$ where $\Y\!=\!\B_1 \!\cap \B_2$.
But $\B_1 \!\cap \B_2$
clearly consists of all those functions $f'$ which coincide with some $f \in \B$ on all even $i$, and are zero elsewhere.
Let $\gamma$ be the homomorphism sending sending $a,b,c$ to $a,b,c^2$.
Then $A_{\theta(\B)}=\gamma^{-1}(A_\Y)$, and so $A_{\theta(\B)}$ is benign by \eqref{PO 2 LE benign subgroup has benign image}
in Lemma~\ref{LE benign subgroup has benign image}.

\vskip1mm

\textit{$\BB$ is closed under  $\tau$.}
By Example~\ref{EX Uts is benign} 
\, $B_{0,2}$ is benign in $F$. Take $K$ to be a finitely presented group with a finitely generated subgroup $L$ such that $F \cap L = B_{0,2}$, and set   $\Lambda = K *_{L} \! t$.
As it is easy to check 
$\langle K, K^t \rangle 
$ is equal to $ K *_{L} K^t$. 
Since $B \cap B_{0,2}  = B_{0,2}= F^t \cap B_{0,2}$, we by 
\eqref{PO 1 in CO NEW G*H lemma corollary} in Corollary~\ref{CO NEW G*H lemma corollary}
have $\langle B, F^t \rangle = B *_{B_{\,0,2}} \!\!F^t$ in $\Lambda$. 
Since $B$ is free, it has an automorphism swapping
$b_0$ and $b_1$, while fixing $a$ and $b_i$
for any $i\neq 0,1$.
This automorphism and the  identical automorphism of $F^t$ agree on $B_{0,2}$, so they have a common extension $\xi$ on entire $B *_{B_{\,0,2}} \!\!F^t$\!. Then $\Phi= \Lambda *_{\xi} q$ is finitely presented as $\xi$ 
can be defined by its values on just six generators $a, b_0, b_1;\; a^t\!, b^t\!, c^t$ of $B *_{B_{0,2}} \!F^t$ (when $i\neq 0,1$, then $b_i = b_i^t\in F^t$).
For every $f\! \in \mathcal E$ we clearly have $\tau (b_f) = b_{f'}$ where
$f'(0)=f(1)$,\,
$f'(1)=f(0)$,\,
and
$f'(i)=f(i)$,\,
for $i\neq 0,1$.
Thus,  $A_{\tau(\B)}=A_{\B}^q$ in $\Phi$.
Applying 
\eqref{PO 1 LE benign subgroup has benign image} in
Lemma~\ref{LE benign subgroup has benign image} for the trivial embedding $\alpha: F \to \Phi$ we get that $A_{\B}$ is benign in $\Phi$. By the above construction and again by \eqref{PO 1 LE benign subgroup has benign image} in
Lemma~\ref{LE benign subgroup has benign image} $A_{\tau(\B)}=A_{\B}^{\,q}$ also is benign in $\Phi$.
The preimage $\alpha^{-1}(A_{\tau(\B)})$ in $F$ clearly is $A_{\tau(\B)}$, which then is benign in $F$ by \eqref{PO 2 LE benign subgroup has benign image} in
Lemma~\ref{LE benign subgroup has benign image}.

\vskip1mm

\textit{$\BB$ is closed under  $\omega_m$ for each $m=1,2,\ldots$}\;
%
If $\B_m  = \iota(\B, \E_m)$, then $A_{\B_m}$\! is benign as  $A_{\E_m}$ is benign by Example~\ref{EX E_m is benign}. 
Clearly, $\omega_m(\B)=\omega_m(\B_m)$, and we may suppose $\B = \B_m$.
Since $B_{0,m}$ is benign
in $\langle b,c\rangle$ by Example~\ref{EX Uts is benign},  $\langle b,c\rangle$ is embeddable into some finitely presented group $K$ with a finitely generated subgroup $P$ such that 
$\langle b,c\rangle \cap P = B_{0,m}$. 
Set
$
\Sigma = K *_P (g,h,k)
$, where each of $g,h,k$ stabilizes $P$\, (and also $B_{0,m}$).

Since the intersection of $\langle b_0,\ldots, b_{m-1} \rangle$ with $B_{0,m}$ and, thus, with $P$ is trivial, by
\eqref{PO 1 in CO NEW G*t lemma corollary} in Corollary~\ref{CO NEW G*t lemma corollary} (see also Remark~\ref{RE about multiple stable letters})
the elements $b_0,\ldots, b_{m-1},\, g,h,k $ generate in $\Sigma$ the free subgroup $\langle b_0,\ldots, b_{m-1}\rangle * \langle g,h,k \rangle$. 
Denoting $h_i=h^{k^i}$ for $i \in \Z$,
we get another free subgroup 
$\langle b_0,\ldots, b_{m-1},\, g,\, h_0,\ldots, h_{m-1} \rangle$ in $\Sigma$.
Setting $
h_f=h_0^{f(0)} \!\cdots\, h_{m-1}^{f(m-1)}
$ and $g_f=g^{h_f}$ we
by Example~\ref{EX benign a b_i c_i} get a benign subgroup 
$L=\langle g_f^{\vphantom8} b_f^{-1} 
\mathrel{|}
f\in \mathcal E_m \rangle$. I.e., $\Sigma$ is embeddable into a finitely presented  $M$ with a finitely generated subgroup $R$ such that 
$\Sigma \cap R = L$. Take $\Pi=M *_{R} a$,
with some $a$ fixing $R$.  

If we show that $a,b,c$ generate a free subgroup in $\Pi$, we can identify it with our initial group $F=\langle a,b,c \rangle$.
That will follow from \eqref{PO 1 in CO NEW G*t lemma corollary} in Corollary~\ref{CO NEW G*t lemma corollary},
if we verify
$\langle a \rangle \cap R=\1$ (which is evident) and $\langle b, c \rangle \cap \,R=\1$.  By construction $\langle b, c \rangle \cap\, R=\langle b, c \rangle \cap L$.
Any two distinct words on letters $b_0,\ldots, b_{m-1}$ are distinct modulo $B_{0,m}$ and, thus, are distinct in $K$ modulo $P$. I.e., we can choose a transversal $T_P$ to $P$ in $K$ containing all the words on $b_0,\ldots, b_{m-1}$. Any non-trivial product $v$ of elements of type $g_f^{\vphantom8} b_f^{-1}\!=\! g^{h_0^{f(0)} \!\cdots\, h_{m-1}^{f(m-1)}} b_{m-1}^{-f(m-1)} \!\cdots b_{0}^{-f(0)}$ already is in normal form in $\Sigma$ (with respect to $T_P$) because it is a product of stable letters and transversal's elements. $v$ will be in $\langle b,c \rangle$ only if no stable letter is preset, i.e., if it is trivial.

Since  $F=\langle a,b,c \rangle$ is free, using its isomorphism $\rho$ sending 
$a, b, c$ to $a, b^{c^m}\!\!\!\!\!,\;c$ define 
$\Omega=\Pi *_{\rho} r$.
Denote $W_{\B}=\langle g_f\!,\, a,\, r \mathrel{|} f\in \B\rangle$. For any  $l\in \omega_m \B$ the element $a_l=a^{b_l}$ is in $W_{\B}$. Let us show this simple fact by a step-by-step construction example. 
Let $m=3$ and let  $(2,5,3)$, $(7,2,4) \in \B$.
Then $\omega_3 \B$ contains, say, 
$
l=(
0,0,0,\;
7,2,4,\;
0,0,0,\;
2,5,3,\;
7,2,4)
$. To show that $a^{b_l}\in W_{\B}$ start by the initial function $l_1=(7,2,4)$, and then modify it to achieve the function $l$ above.
The relation $(g_f^{\vphantom8} b_f^{-1})^a=g_f^{\vphantom8} b_f^{-1}$  is equivalent to $a^{g_f\vphantom{j}}\!=a^{b_f}$.

\vskip-1mm
\textit{Step 1.} Since  $f=l_1=(7,2,4)$ is in $ \B$, then $g_{l_1} \in W_{\B}$, and so 
$
a^{g_{l_1}}\!\!=
a^{b_{l_1}}=
a^{b_0^{7}\,b_1^{2}b_2^{4}}
\in W_{\B}$. 

\textit{Step 2.} 
Since 
$b_i^r=b_i^\rho\!=b_{i+3}$, conjugating the above element by $r$ we get that
$$
(a^{b_{l_1}})^r\!\!=
(a^r)^{(b_0^{7}\,b_1^{2}b_2^{4})^r}\!\!=
a^{b_3^{7}\,b_4^{2}b_5^{4}}
=
a^{b_0^{0}\,b_1^{0}b_2^{0} \;\cdot\; b_3^{7}\,b_4^{2}b_5^{4}}=
a^{b_{l_2}}\in W_{\B}
\text{\quad (with $l_2=(0,0,0,\;
7,2,4)$)}.
$$

Conjugate the above by 
$g_f$ for $f=(2,5,3)$. We have:
$
\big(\! a^{b_{l_2}}\big)^{g_f}
\!\!\!=\, 
a^{b_{l_2}\cdot \,g_f}
\!\!= a^{b_3^{7}\,b_4^{2}b_5^{4}\;\cdot \,g_f}\!
$.

\textit{Step 3.} Each of $g,h,k$ commutes with each of 
$b_i$ for $i <0$ or $i\ge m=3$, 
and so $g_f$ commutes with $b_3^{7}\,b_4^{2}b_5^{4}$, and 
$
a^{b_{l_2}\cdot \,g_f}
=
a^{g_f \cdot \, b_3^{7}\,b_4^{2}b_5^{4}}
$. 
Then again applying step 1 we get:
\vskip-3mm
$$
\big(\!a^{g_f}\!\big)^{b_3^{7}\,b_4^{2}b_5^{4}}
=
a^{b_0^{2}\,b_1^{5}b_2^{3} \;\cdot\; b_3^{7}\,b_4^{2}b_5^{4}}=
a^{b_{l_3}}\in W_{\B}
\text{\quad (with $l_3=(2,5,3,\;
7,2,4)$)}.
$$
Then we apply step 2 \textit{twice}, i.e., conjugate the above by $r^2$ to get 
the element $a^{b_{l_4}}$ with
$
l_4=(
0,0,0,\;
0,0,0,\;
2,5,3,\;
7,2,4)
$.
Next apply step 3 and step 1 again to conjugate 
$a^{b_{l_4}}$ by 
$g_f$ for $f=(7,2,4)$. We get the element 
$a^{b_{l_5}}$ with
$
l_5=(
7,2,4,\;
0,0,0,\;
2,5,3,\;
7,2,4)
$.
Finally, we again apply step 2, i.e., conjugate $a^{b_{l_5}}$  by $r$ to discover in $ W_{\B}$ the element $a^{b_{l}}=a_l$.
Since this procedure can easily be performed for an arbitrary $l \in \omega_m \B$, we get that $A_{\omega_m \B} \le W_{\B}$.

\vskip1mm
Let us single out an auxiliary ``slimmer'' subgroup $\Omega'$ in $\Omega$. 
Since $\langle b,c\rangle \cap P = B_{0,m}$, 
the subgroup $\big\langle \langle b,c\rangle, g,h,k \big\rangle$ is equal to $\Sigma' = \langle b,c\rangle *_{B_{0,m}} (g,h,k)$. 
Since $\Sigma'$ contains each of $b,c, g,h,k$ it also contains $L$. Thus, $\Sigma' \cap R = L$, and using 
\eqref{PO 1 in CO NEW G*t lemma corollary} in Corollary~\ref{CO NEW G*t lemma corollary}
we see that  $\langle \Sigma', a \rangle$ is equal to $\Pi'= \Sigma' *_L a$. 
Finally, since $a,b,c\in \Pi'$, the subgroup $ \langle \Pi', r \rangle$ is equal to $\Omega' = \Pi' *_\rho r$.
The advantage of $\Omega'$ is that it is an extension of $\langle b,c\rangle$ by means of three ``nested'' HNN-extensions, and we, thus, possess the \textit{full} list of defining relations of $\Omega'$.

Since any $w \in W_{\B}$ also is in $\Omega'$, it can be brought to its normal form involving $r$ and some elements from $\Pi'$.
The latters can in turn be brought to normal forms involving $a$ and some elements from $\Sigma'$. 
Then the latters can further be brought to normal forms involving $g, h,k$ and some elements from $\langle b,c\rangle$. 
I.e., $w$ can be brought to a \textit{unique ``nested'' normal form}.
Detect the cases when it involves $a,b,c$ only.
The only relations of $\Omega'$ involving $g,h,k$ are  $a^{g_f}\!=a^{b_f}$. Thus, the only way by which $g,h,k$ may be eliminated in the normal form  is to have in $w$ subwords of type $g_f^{\!-1} a\, g_f^{\vphantom8} = a^{g_f}$\!,\; to replace them by respective $a^{b_f}\in\langle a,b,c\rangle$. 
If after that some subwords $g_f$ still remain, then three scenario cases are possible: 

\textit{Case 1}. We may have a subword of type $w'=g_f^{-1}  a^{b_l}  g_f^{\vphantom8}$ for 
such an $l$ that $l(i)=0$ for $i=0,\ldots,m-1$.
Check example of step 1, when this is achieved for $l=l_2=(0,0,0,\;
7,2,4)$ and $f=(2,5,3)$. 
Then just replace $w'$ by $a^{b_{l_3}}$ for an $l_3\in \omega_m \B$ (e.g. $l_3=(2,5,3,\;
7,2,4)$). 

\textit{Case 2}. If  $w'=g_f^{-1}  a^{b_l}  g_f$,\, but the condition $l(i)=0$ fails for an $i=0,\ldots,m-1$, then $g_f$ does \textit{not} commute with $b_l$, so we cannot apply the relation $a^{g_f}\!=a^{b_f}$, and so $w \notin \langle a,b,c\rangle$.
Turning to  example in steps 1--3, notice that for, say, $f=(2,5,3) \in \B$ we may \textit{never} get something like
$a^{(g_f)^{\,2} }\!\!
=(a^{b_0^{2}\,b_1^{5}\,b_2^{3}})^{g_f}\!\!
=a^{(b_0^{2}\,b_1^{5}\,b_2^{3})^2}
$ because $g_f$ does not commute with $b_0,  b_1,  b_2$. That is, all the \textit{new} functions $l$ we get are from $\omega_m \B$ \textit{only}.

\textit{Case 3}. If $g_f$ is in $w$, but not in a subword $g_f^{-1}  a^{b_l}  g_f$, we again have $w \notin \langle a,b,c\rangle$.

This means, if $w \in \langle a,b,c\rangle$, then  elimination of  $g,h,k$  turns $w$ to a product of elements from  $\langle r\rangle$ and of some  $a^{b_l}$ for some $l\in \omega_m \B$ ($a$ also is of that type, as $(0)\in\B$).
Now apply  \ref{SU The conjugates collecting process}  for 
$\X=\{a^{b_l} \mathrel{|} l\in \omega_m \B \}$
and $\Y=\{r\}$ to state that
$w$ is a product of some power $r^k$ and of some elements each of which is an $a^{b_l}$ conjugated by a power $r^{n_i}$ of $r$.
These conjugates are in $\omega_m \B$, and so $w\in \langle a,b,c\rangle$ if and only if $k=0$, i.e., if 
$w\in A_{\omega_m \B}$.

It remains to notice that since $F=\langle a,b,c\rangle$ is a free group,  $F \cap W_{\B}=A_{\omega_m \B}$. 
Since $A_{\B}$ is benign, its image 
$\langle g_f \mathrel{|} f\in \B\rangle$ also is benign. Then 
$W_{\B}=\langle g_f\!,\, a,\, r \mathrel{|} f\in \B\rangle$ is benign by
\eqref{PO 2 LE benign subgroups intersection and union are benign} in Lemma~\ref{LE benign subgroups intersection and union are benign}, and  $A_{\omega_m \B}$ is benign by \eqref{PO 1 LE benign subgroups intersection and union are benign} in Lemma~\ref{LE benign subgroups intersection and union are benign}. 

\vskip-2mm

\medskip
\noindent 
E-mail:
\href{mailto:v.mikaelian@gmail.com}{v.mikaelian@gmail.com}
$\vphantom{b^{b^{b^{b^b}}}}$

\noindent 
Web: 
\href{https://www.researchgate.net/profile/Vahagn-Mikaelian}{researchgate.net/profile/Vahagn-Mikaelian}

 \end{document}